\long\def\delete#1{}
\definecolor{Blue}{rgb}{0,0,1}
\definecolor{Red}{rgb}{1,0,0}
\definecolor{DarkGreen}{rgb}{0,0.6,0}
\definecolor{DarkYellow}{rgb}{1,1,0.2}
\definecolor{DarkPurple}{rgb}{.6,0,1}
\def\mmc{\mathscr{C}}
\def\ma{\mathscr{A}}
\def\mf{\mathscr{F}}
\def\mg{\mathscr{G}}
\def\ml{\mathscr{L}}
\def\mm{\mathscr{M}}
\def\mp{\mathscr{P}}
\def\ms{\mathscr{S}}
\def\bs{\setminus}
\def\mmp{\mathcal{P}_{m_1}}
\def\mmmp{\mathcal{P}_{m_2}}
\def\mmu{\mathscr{U}}
\def\ff{\mathbb{F}_q}
\def\ge{\geqslant}
\def\le{\leqslant}
\def\b{\brack}
\def\ro{\romannumeral}
\def\rmo{\rm{(1)}}
\def\rmt{\rm{(2)}}
\def\rmth{\rm{(3)}}
\numberwithin{equation}{section}
\newtheorem{thm}{Theorem}[section]
\newtheorem{lem}[thm]{Lemma}
\begin{document}
	\setcounter{page}{1}
	\renewcommand{\thefootnote}{}
	\newcommand{\remark}{\vspace{2ex}\noindent{\bf Remark.\quad}}
	\renewcommand{\abovewithdelims}[2]{%
		\genfrac{[}{]}{0pt}{}{#1}{#2}}

	
	\def\qed{\hfill$\Box$\vspace{11pt}}
	
	\title {\bf  Cross $t$-intersecting families for symplectic polar spaces}

	\author{Tian Yao\thanks{E-mail: \texttt{yaotian@mail.bnu.edu.cn}}}
	\author{Kaishun Wang\thanks{Corresponding author. E-mail: \texttt{wangks@bnu.edu.cn}}}
	\affil{Laboratory of Mathematics and Complex Systems (Ministry of Education), School of
		Mathematical Sciences, Beijing Normal University, Beijing 100875, China}

	\date{}
	
	\openup 0.5\jot
	\maketitle

	\begin{abstract}
		Let $\mp$ be a symplectic polar space over a finite field $\ff$, and $\mp_m$ denote the collection of all $k$-dimensional totally isotropic subspace in $\mp$.
	Let $\mf_1\subset\mp_{m_1}$ and $\mf_2\subset\mp_{m_2}$ satisfy $\dim(F_1\cap F_2)\ge t$ for any $F_1\in\mf_1$ and $F_2\in\mf_2$. We say they are cross $t$-intersecting families.  Moreover, we say they are trivial if each member of them contains a fixed $t$-dimensional totally isotropic subspace. In this paper, we show that cross $t$-intersecting families with maximum product of sizes are trivial. We also describe the structure of non-trivial $t$-intersecting families with maximum product of sizes.
		
		\vspace{2mm}
		
		\noindent{\bf Key words}\ \ cross $t$-intersecting families; symplectic polar spaces.
		
		\
		
		\noindent{\bf AMS classification:} \   05D05, 05A30, 51A50
		
		
	\end{abstract}
	
	\section{Introduction}
	
		Intersection problems originate from the famous Erd\H{o}s-Ko-Rado Theorem \cite{EKR}. In recent years, intersection problems for mathematical objects which are relative to vector spaces have been caught lots of attention \cite{VHM1,VHM2,AEKR,SPEKR1}.
	
	Let $n$ and $k$ be positive integers with $n\ge k$, $V$ an $n$-dimensional vector space over the finite field $\ff$, where $q$ is a prime power, and ${V\b k}_q$ denote the family of all $k$-dimensional subspaces of $V$. We usually replace ``$k$-dimensional subspace'' with ``$k$-subspace'' for short. Define the \emph{Gaussian binomial coefficient} by
	$${n\b k}_q:=\prod_{0\le i<k}\dfrac{q^{n-i}-1}{q^{k-i}-1},$$
	and set ${n\b0}_q=1$ and ${n\b c}=0$ if $c$ is a negative integer. Note that the size of ${V\b k}_q$ is ${n\b k}_q$. From now on, we will omit the subscript $q$.
	
	Let $t$ be a positive integer. A family $\mf\subset{V\b k}$ is called \emph{$t$-intersecting} if $\dim(F_1\cap F_2)\ge t$ for any $F_1,F_2\in\mf$. A $t$-intersecting family $\mf$ is called \emph{trivial} if there exists a $t$-subspace contained in each element of $\mf$. The Erd\H{o}s-Ko Rado Theorem for vector space \cite{VEKR2,VEKR1,VEKR3} shows that a $t$-intersecting subfamily of ${V\b k}$ with maximum size is trivial when $\dim V>2k$. The structure of non-trivial $t$-intersecting subfamily of ${V\b k}$ with maximum size was determined via the parameter ``$t$-covering number", see \cite{VHM1,VHM2}. For $\mf_1\in{V\b k_1}$ and $\mf_2\subset{V\b k_2}$, we say they are cross $t$-intersecting if $\dim(F_1\cap F_2)$ holds for any $F_1\in\mf_1$ and $F_2\in\mf_2$. Recently, Cao et al \cite{CAO} describe the structure of cross $t$-intersecting families with the first and second larges product of sizes.

	Let $f$ be a non-degenerate alternating bilinear form defined on a $2\nu$-dimensional vector space $\ff^{2\nu}$ over $\ff$.  An $m$-subspace $M$ of $V$ is called \emph{totally isotropic} if $f(x,y)=0$ holds for any $x,y\in M$. We know that $\nu$ is the dimension of the maximal totally isotropic subspaces. Denote the sets of all totally isotropic subspaces and $m$-dimensional totally isotropic subspaces with respect to $f$ by $\mp$ and $\mp_m$, respectively, where $0\le m\le\nu$. Equipped with the inclusion relation, $\mp$ is a \emph{symplectic polar space}, denoted by the same symbol $\mp$. The \emph{rank} of $\mp$ is the dimension of the maximal totally isotropic subspaces. The symplectic space is one of the six kinds of classical polar spaces with $\nu\ge2$ \cite{CPS}.

	A subfamily of $\mp_m$ is called \emph{$t$-intersecting} if any two members have a intersection with dimension at least $t$. The maximum sized $t$-intersecting subfamilies of $\mp_m$ were widely studied and described. See \cite{SPEKR2,SPEKR1} for $t=1$ and \cite{SPEKRT} for all $t$. Recently, the authors characterized the second largest $t$-intersecting families \cite{YT}. There are also some results for other classical polar spaces, see\cite{012,PLANE,HMQ,MENTION1,SPEKR2,SPEKR1} for more details.
	
	Let $\mf_1\subset\mp_{m_1}$ and $\mf_2\subset\mp_{m_2}$ satisfy that $\dim(F_1\cap F_2)\ge t$ for any $F_1\in\mf_1$ and $F_2\in\mf_2$. 
	We say they are \emph{cross $t$-intersecting}. 
	Moreover, they are called \emph{trivial} if each member of them contains a fixed $t$-dimensional totally isotropic subspace. 
	In \cite{sp-add},  Ihringer characterized the structure of cross $t$-intersecting subfamilies of $\mp_\nu$ with maximum product of sizes.
	
	The first main result of this paper is the following.
	
	\begin{thm}\label{trivial}
		Let $\nu$, $m_1$, $m_2$ and $t$ be positive integers with $m_1\ge m_2\ge t$ and $2\nu\ge2m_1+m_2+1$, and $\mp$  a symplectic polar spaces with rank $\nu$. Suppose $\mf_1\subset\mp_{m_1}$ and $\mf_2\subset\mp_{m_2}$ are cross $t$-intersecting families with maximum product of sizes. Then there exists a $t$-dimensional totally isotropic subspace contained in each member of $\mf_1$ and $\mf_2$. 
	\end{thm}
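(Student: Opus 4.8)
\quad
The plan is to compare with the ``star'' construction and to rule out everything else. For a $t$-dimensional totally isotropic subspace $T$, the quotient $T^\perp/T$ carries an induced non-degenerate alternating form of rank $\nu-t$, and the totally isotropic subspaces through $T$ correspond bijectively to those of $T^\perp/T$; hence, writing $N(r,s)$ for the number of $s$-dimensional totally isotropic subspaces of a rank-$r$ symplectic polar space, the pair $\bigl(\{F\in\mp_{m_1}:T\subseteq F\},\,\{F\in\mp_{m_2}:T\subseteq F\}\bigr)$ is cross $t$-intersecting with product $P_0:=N(\nu-t,m_1-t)\,N(\nu-t,m_2-t)$. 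So it suffices to show that every cross $t$-intersecting pair has product at most $P_0$, with equality only for a pair of stars through a common $t$-subspace. Fix an optimal pair $(\mf_1,\mf_2)$. Since $P_0>0$ both families are nonempty, and a routine enlargement argument shows that the pair is \emph{saturated}: $\mf_1=\{F\in\mp_{m_1}:\dim(F\cap F_2)\ge t\ \text{for all}\ F_2\in\mf_2\}$, and symmetrically for $\mf_2$.

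The first step is a geometric lemma: \emph{if $T$ is a $t$-dimensional totally isotropic subspace and $F_2\in\mp_{m_2}$ with $T\not\subseteq F_2$, then there is $F_1\in\mp_{m_1}$ with $T\subseteq F_1$ and $\dim(F_1\cap F_2)=\dim(T\cap F_2)<t$.} Passing to $W:=T^\perp/T$ and writing $\overline{(\,\cdot\,)}$ for images in $W$, a short computation gives $\dim(F_1\cap F_2)=\dim(T\cap F_2)+\dim\bigl(\overline{F_1}\cap\overline{F_2\cap T^\perp}\bigr)$ for every totally isotropic $F_1$ with $T\subseteq F_1\subseteq T^\perp$; since $\dim\overline{F_2\cap T^\perp}\le m_2$ and $2(\nu-t)\ge 2(m_1-t)+m_2+1$, one can choose a totally isotropic $(m_1-t)$-subspace $\overline{F_1}$ of $W$ disjoint from $\overline{F_2\cap T^\perp}$, and its preimage is the required $F_1$. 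Together with saturation, this disposes of the ``trivial'' side: \emph{if one of $\mf_1,\mf_2$ equals a full star $\{F:T\subseteq F\}$, then $T$ lies in every member of the other family, whence saturation forces that family to be the full star through $T$ as well} --- which is exactly the conclusion. It therefore remains to prove that if neither $\mf_1$ nor $\mf_2$ is a full star through a $t$-dimensional totally isotropic subspace, then $|\mf_1|\,|\mf_2|<P_0$.

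This last bound is the heart of the matter and is where essentially all the work lies; it is a Hilton--Milner/Frankl-type estimate, the analogue for symplectic polar spaces of the arguments in \cite{VHM1,VHM2,CAO}. When $\mf_1$ is itself $t$-intersecting one may invoke the Erd\H{o}s-Ko-Rado theorem for symplectic polar spaces \cite{SPEKRT} together with the description of the second-largest $t$-intersecting families \cite{YT} to bound $|\mf_1|$ strictly below $N(\nu-t,m_1-t)$; when $\mf_1$ is not $t$-intersecting, one fixes $F_1,F_1'\in\mf_1$ with $d:=\dim(F_1\cap F_1')\le t-1$ as small as possible and observes that every member of $\mf_2$ meets $F_1+F_1'$ in dimension at least $2t-d\ge t+1$, which pins $\mf_2$ to a subspace of dimension at most $2m_1-d$; iterating this with further members (and, symmetrically, exploiting that in this regime $\mf_2$ too has no common $t$-subspace) and optimising over the finitely many possible intersection patterns yields $|\mf_1|\,|\mf_2|<P_0$. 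The hypothesis $2\nu\ge 2m_1+m_2+1$ is consumed precisely in forcing each of these Gaussian-binomial estimates strictly below $P_0$.

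I expect the main obstacle to be exactly this final estimate. Unlike the single-family problem, a cross $t$-intersecting family need not be $t$-intersecting, so shifting and the kernel method are not directly available; instead one must carry out a careful and rather long case analysis of the minimal sub-configurations of the two families and bound the relevant counts of totally isotropic subspaces, and it is there that the arithmetic hypothesis on $\nu$ must be used with care.
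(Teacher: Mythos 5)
Your reduction is sound as far as it goes: the quotient computation of the star product $P_0$, the saturation observation, and the geometric lemma producing, for $T\not\subseteq F_2$, some $F_1\in\mp_{m_1}$ with $T\subseteq F_1$ and $\dim(F_1\cap F_2)=\dim(T\cap F_2)<t$ (which correctly disposes of the case where one family is a full star) are all fine, and that lemma is close in spirit to the paper's Lemma \ref{sp-cover}. But the statement you defer to the end --- that a pair in which neither family is a full star satisfies $|\mf_1||\mf_2|<P_0$ --- is the entire content of the theorem, and your sketch of it is not a proof. Two concrete problems. First, in the subcase where $\mf_1$ is $t$-intersecting but not a star, you propose to bound $|\mf_1|$ strictly below the star size via the single-family EKR and second-largest-family results; but a bound on $|\mf_1|$ alone says nothing about the product, because $|\mf_2|$ can then greatly exceed $N'(t;m_2;2\nu)$ (take $\mf_1$ a single subspace $F$: saturation only forces $\mf_2\subseteq\{G:\dim(G\cap F)\ge t\}$, which is far larger than a star). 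Any correct argument must trade the two sizes off against one another, and your sketch contains no mechanism for doing so. Second, the ``iterate over further members and optimise over the finitely many intersection patterns'' step in the non-$t$-intersecting subcase is precisely the hard quantitative work, and it is left entirely unexecuted --- as you acknowledge yourself.

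The paper closes exactly this gap with machinery absent from your proposal: the $t$-covering number $\tau_t$. Lemma \ref{sp-yibanshangjie} gives $|\mf_1|\le{\tau_t(\mf_1)\b t}{m_2-t+1\b1}^{\tau_t(\mf_2)-t}N'(\tau_t(\mf_2);m_1;2\nu)$; note that the bound on $\mf_1$ is governed by the covering number of the \emph{other} family, which is precisely the missing trade-off. Lemma \ref{sp-dijian} shows this bound is strictly decreasing in the covering number under $2\nu\ge 2m_1+m_2+1$, and Lemma \ref{sp-cover} shows that when $\tau_t(\mf_1)=\tau_t(\mf_2)=t$ the two minimum covers coincide, giving the common $t$-subspace. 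Multiplying the two bounds and invoking monotonicity eliminates every case with $(\tau_t(\mf_1),\tau_t(\mf_2))\neq(t,t)$ in a few lines, with no case analysis on intersection patterns. To complete your write-up you should either import this covering-number argument or actually carry out the Hilton--Milner-type analysis you allude to; as it stands the proof has a genuine gap at its central step.
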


	Based on Theorem \ref{trivial}, we get a more general theorem, see Theorem \ref{trivial2}.
	
	For a subspace $A$ of $\ff^{2\nu}$ and a positive integer $a$, write $\mm(a,A)=\{F\in\mp_a: F\subset A\}$. Let $M\in\mp_{m_2+1}$,  $T\in\mm(t,M)$ and $S\in\mp_{t+1}$. 
	Write
	\begin{equation}\label{c1-c4}
		\begin{aligned}
			&\mmc_1(M,T;m_1,t)=\{F\in\mp_{m_1}: T\subset F, \dim(F\cap M)\ge t+1\},\\
			&\mmc_2(M,T;m_2)=\{F\in\mp_{m_2}: T\subset F\}\cup{M\b m_2},\\
			&\mmc_3(S;m_1)=\{F\in\mp_{m_1}:S\subset F\},\\
			&\mmc_4(S;m_2,t)=\{F\in\mp_{m_2}:\dim(F\cap S)\ge t\}.
		\end{aligned}
	\end{equation}
Observe that $\mmc_1(M,T;m_1,t)$ and $\mmc_2(M,T;m_2)$ are cross $t$-intersecting families. So are $\mmc_3(S;m_1)$ and $\mmc_4(S;m_2,t)$.	 
Our second main result describe the structure of cross $t$-intersecting families with the second largest product of sizes.	

\begin{thm}\label{non-trivial}
	Let $\nu$, $m_1$, $m_2$ and $t$ be positive integers with $m_1\ge m_2\ge t+1$ and $2\nu\ge2m_1+m_2+t+3$, and $\mp$  a symplectic polar spaces with rank $\nu$.	Suppose that $\mf_1\subset\mp_{m_1}$ and $\mf_2\subset\mp_{m_2}$ are non-trivial cross $t$-intersecting families with maximum product of sizes. 
	\begin{itemize}
		\item[\rmo] If $m_2>2t$ or $(m_1,m_2,t)=(2,2,1),(3,2,1)$, then there exist $M\in\mp_{m_2+1}$ and $T\in\mm(t,M)$ such that
		\begin{itemize}
			\item[\rm{(\ro1)}] $\mf_1=\mmc_1(M,T;m_1,t)$, $\mf_2=\mmc_2(M,T;m_2)$; or
			\item[\rm{(\ro2)}] $m_1=m_2$ and $\mf_1=\mmc_2(M,T;m_1)$, $\mf_2=\mmc_1(M,T;m_2,t)$.
		\end{itemize}
		\item[\rmt] If $m_2\le2t$ and $(m_1,m_2,t)\neq(2,2,1),(3,2,1)$, then there exists $S\in\mp_{t+1}$ such that
		\begin{itemize}
			\item[\rm{(\ro1)}] $\mf_1=\mmc_3(S;m_1)$, $\mf_2=\mmc_4(S;m_2,t)$; or
			\item[\rm{(\ro2)}] $m_1=m_2$ and $\mf_1=\mmc_4(S;m_1,t)$, $\mf_2=\mmc_3(S;m_2)$.
		\end{itemize}
	\end{itemize}
\end{thm}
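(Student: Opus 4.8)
The plan is to reduce Theorem~\ref{non-trivial} to Theorem~\ref{trivial} by a ``stability'' argument: we already know from Theorem~\ref{trivial} that \emph{trivial} cross $t$-intersecting families maximize the product, so our task is to find, among the non-trivial pairs, those whose product is as large as possible, and show that the only such pairs are the ones listed. First I would establish exact formulas for $|\mmc_1|,\dots,|\mmc_4|$ via Gaussian binomial coefficient identities in $\mp$ (counting totally isotropic subspaces of a given dimension through a fixed totally isotropic subspace, inside a fixed totally isotropic subspace, or meeting a fixed subspace in prescribed dimension), and compare $|\mmc_1|\cdot|\mmc_2|$ with $|\mmc_3|\cdot|\mmc_4|$; the threshold at $m_2 = 2t$ should emerge from this comparison, which explains the case split in the statement, and the small exceptional triples $(2,2,1),(3,2,1)$ should be exactly where the generic inequality fails to be strict.

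Next comes the structural core. I would take arbitrary non-trivial cross $t$-intersecting $\mf_1,\mf_2$ with maximum product and extract a ``kernel''-type invariant: since they are cross $t$-intersecting but not trivial, there is no common $t$-subspace, yet every $F_1\in\mf_1$ meets every $F_2\in\mf_2$ in dimension $\ge t$. The natural move is to pick $F_2\in\mf_2$, restrict attention to the trace $\{F_1\cap F_2 : F_1\in\mf_1\}$, and argue that $\mf_1$ is ``almost'' contained in the star of some $t$-subspace; quantitatively, one bounds $|\mf_1|$ above by the size of a star plus a small correction, and similarly for $|\mf_2|$, and then the product bound forces equality in these estimates. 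This is where I expect the arguments from the $t$-intersecting (non-cross) setting in \cite{VHM1,VHM2,YT} and the polar-space EKR machinery of \cite{SPEKRT} to be adapted: one typically uses a shifting/compression operation compatible with the symplectic form, or a spectral/eigenvalue bound on the relevant association scheme, to pin down that $\mf_1$ and $\mf_2$ each have a well-defined ``generalized covering configuration'', and the hypothesis $2\nu\ge 2m_1+m_2+t+3$ (stronger than the one in Theorem~\ref{trivial}) is precisely the room needed to make the second-order error terms strictly smaller than the gap to the extremal product.

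Once $\mf_1$ and $\mf_2$ are each forced into a narrow list of candidate shapes, the final step is bookkeeping: determine the fixed objects ($M\in\mp_{m_2+1}$ and $T\in\mm(t,M)$, or $S\in\mp_{t+1}$) from the combinatorial data, check that cross $t$-intersection across the pair rules out all combinations except $(\mmc_1,\mmc_2)$ and (when $m_1=m_2$) its transpose in the $m_2>2t$ regime, and $(\mmc_3,\mmc_4)$ with its transpose in the $m_2\le 2t$ regime, and verify via the size formulas from the first step that each surviving pair does attain the maximum non-trivial product. The symmetry $m_1=m_2$ clause is automatic: when the two ground dimensions coincide one may swap the roles of $\mf_1$ and $\mf_2$, so both orderings of each extremal pair occur.

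The main obstacle I anticipate is the second step --- proving that maximum non-trivial product forces $\mf_1,\mf_2$ into the ``covering'' configurations --- because there is no a priori bound on how non-trivial such a pair can be, and a crude union bound over stars is not tight enough near the extremum. The delicate point is controlling the interaction: a large $\mf_1$ that is not a star constrains $\mf_2$ severely (each $F_2$ must $t$-meet \emph{all} of $\mf_1$), and vice versa, so one must run the two estimates simultaneously rather than sequentially, tracking how a deficit in $|\mf_1|$ below the star bound can only be compensated by an even larger deficit in $|\mf_2|$. Making that trade-off quantitative, with the exceptional small triples falling out naturally, is the crux; once it is in place, the remaining identification of $M,T,S$ is routine incidence geometry in the symplectic polar space.
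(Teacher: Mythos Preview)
Your first step---computing and comparing $|\mmc_1||\mmc_2|$ against $|\mmc_3||\mmc_4|$---is exactly right and matches the paper (Lemma~\ref{cp1-cp2}); the threshold $m_2=2t$ with the two small exceptions does emerge from that calculation.

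The genuine gap is your second step. The organizing invariant is not a trace analysis or an ``almost star'' estimate but the $t$-covering number $\tau_t(\mf_i)$: the minimum dimension of a totally isotropic subspace meeting every member of $\mf_i$ in dimension $\ge t$. The paper already has, as Lemma~\ref{sp-yibanshangjie}, a bound
\[
|\mf_1|\le{\tau_t(\mf_1)\b t}{m_2-t+1\b 1}^{\tau_t(\mf_2)-t}N'(\tau_t(\mf_2);m_1;2\nu),
\]
and symmetrically for $|\mf_2|$; multiplying these and using that each factor is strictly decreasing in the covering parameters (Lemma~\ref{sp-dijian}) shows that whenever $(\tau_t(\mf_1),\tau_t(\mf_2))\notin\{(t,t+1),(t+1,t)\}$ the product already falls below $c_1(\nu,m_1,m_2,t)$ (Lemma~\ref{sp-cross-upper-other}). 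Non-triviality rules out $(t,t)$, so only these two pairs survive, and a direct structural analysis of that single remaining case (Lemma~\ref{sp-cross-upper-tt+1})---defining $M$ as the span of all minimum $t$-covers of $\mf_2$ and splitting on $\dim M\in\{t+1,\dots,m_2+1\}$---pins down $(\mmc_1,\mmc_2)$ or $(\mmc_3,\mmc_4)$ exactly. The comparison lemmas \ref{cp1-cp2} and \ref{cp-cp'} then say which of these wins and when the swapped pair can occur.

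The tools you propose do not fit this setting: there is no known shifting or compression on totally isotropic subspaces compatible with the symplectic form, and spectral bounds on the dual polar scheme are neither used here nor sharp enough to isolate the second extremal layer. The ``simultaneous trade-off'' you flag as the crux is resolved cleanly and non-iteratively by the covering-number bound, which already couples $|\mf_1|$ and $|\mf_2|$ through $\tau_t(\mf_1)$ and $\tau_t(\mf_2)$; the stronger hypothesis $2\nu\ge 2m_1+m_2+t+3$ is precisely what makes that coupling strict enough to kill every covering-number profile except $(t,t+1)$ and $(t+1,t)$.
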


\section{Some Lemmas}

In this section, we give some useful lemmas in preparation for the proof of Theorems \ref{trivial} and \ref{non-trivial}.
\begin{lem}\label{gaosifangsuo}
	Let $m$ and $i$ be positive integers with $i<m$. Then the following hold.
	\begin{itemize}
		\item[\rmo] $q^{m-i}<\frac{q^m-1}{q^i-1}<q^{m-i+1}$ and $q^{i-m-1}<\frac{q^i-1}{q^m-1}<q^{i-m}$;
		\item[\rmt] $q^{i(m-i)}<{m\b i}<q^{i(m-i+1)}$.
	\end{itemize}
\end{lem}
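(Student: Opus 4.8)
The plan is to prove part \rmo{} by clearing denominators in elementary fashion, and then to deduce part \rmt{} by applying \rmo{} to each factor in the product expansion of the Gaussian binomial coefficient.

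First I would establish the first chain of inequalities in \rmo. Multiplying through by $q^i-1>0$, the left inequality $q^{m-i}(q^i-1)<q^m-1$ becomes $q^m-q^{m-i}<q^m-1$, i.e. $q^{m-i}>1$, which holds since $m>i\ge1$ and $q\ge2$. The right inequality $q^m-1<q^{m-i+1}(q^i-1)=q^{m+1}-q^{m-i+1}$ becomes $q^{m-i+1}-1<q^{m+1}-q^m=q^m(q-1)$, and this is clear because $q^{m-i+1}-1<q^{m-i+1}\le q^m\le q^m(q-1)$, using $i\ge1$ and $q\ge2$. The second chain in \rmo{} then follows immediately by taking reciprocals: since $\frac{q^i-1}{q^m-1}$ is the reciprocal of $\frac{q^m-1}{q^i-1}$ and both quantities are positive, inversion reverses the bounds $q^{m-i}<\frac{q^m-1}{q^i-1}<q^{m-i+1}$ into $q^{i-m-1}<\frac{q^i-1}{q^m-1}<q^{i-m}$.

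For part \rmt, I would use the factorization ${m\b i}=\prod_{j=0}^{i-1}\frac{q^{m-j}-1}{q^{i-j}-1}$. For each $j$ with $0\le j\le i-1$ we have $1\le i-j<m-j$, so part \rmo{} applied to the pair $(m-j,\,i-j)$ yields $q^{m-i}<\frac{q^{m-j}-1}{q^{i-j}-1}<q^{m-i+1}$, where we used $(m-j)-(i-j)=m-i$. Multiplying these $i$ strict inequalities between positive quantities gives $q^{i(m-i)}<{m\b i}<q^{i(m-i+1)}$, as desired.

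I do not expect any genuine obstacle here; the statement is an estimate whose proof is purely arithmetic. The only points needing attention are verifying that the hypotheses of \rmo{} are satisfied at each invocation (in particular $i-j\ge1$ and $m-j>i-j$ when \rmo{} is used inside the product), and noting that every factor is positive so that multiplying the inequalities is legitimate.
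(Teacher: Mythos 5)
Your proof is correct. The paper states this lemma without any proof, treating it as a standard elementary estimate, and your argument --- clearing denominators to verify part (1), taking reciprocals for the second chain, and then multiplying the termwise bounds over the $i$ factors in the product expansion of the Gaussian binomial coefficient for part (2) --- is exactly the routine verification one would supply, with the hypotheses of (1) correctly checked at each application.
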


For $1\le m\le \nu-1$, let $Q$ be an $m$-subspace of $\ff^{2\nu}$ and $\alpha_1,\dots,\alpha_m$ any basis of $Q$. Note that the rank of the matrix $(f(\alpha_i,\alpha_j))_{m\times m}$ is even and independent of the choice of the basis. We say $Q$ is of  \emph{type $(m,s)$} if the rank of the matrix $(f(\alpha_i,\alpha_j))_{m\times m}$ is $2s$. Note that $Q$ is of type $(m,0)$ if and only if $Q$ is totally isotropic.
	
For positive integers $m_1$ and $ m$ with $m_1\le m\le\nu$, let $N'(m_1;m;2\nu)$ be the number of members of $\mp_{m}$ containing a fixed member of $\mp_{m_1}$. By \cite[Theorem 1]{JS1}, the size of $\mp_m$ is ${\nu\b m}\prod_{i=0}^{m-1}(q^{\nu-i}+1)$, from which we derive that
\begin{equation*}\label{JS}
	N'(m_1;m;2\nu)=\prod\limits_{i=1}^{m-m_1}\dfrac{q^{2(\nu-m+i)}-1}{q^{i}-1}.
\end{equation*}
By \cite[Theorem 9 in Chapter 2]{JS2}, the number of members of $\mp_m$ contained in a fixed $(m+1,1)$-type subspace is $q+1$.	
	
Let $\mf\subset\mp_m$. For $T\in\mp$, if $\dim(T\cap F)\ge t$ holds for each $F\in\mf$, we say $T$ is a \emph{$t$-cover} of $\mf$. Let $\tau_t(\mf)$ denote the minimum dimension of $\mf$'s $t$-covers. From \cite[Lemma 2.4]{CAO}, we derive the following Lemma.	
	
\begin{lem}\label{sp-fangsuo}
	Let $\nu$, $m$, $s$ and $t$ be positive integers with  $\nu>m\ge s\ge t$. Suppose $\mf\subset\mp_k$, $X$ is a $t$-cover of $\mf$  with dimension $x$ and $S\in\mp_s$. If $\dim(X\cap S)=y<t$, then there exists $R\in\mp_{s+t-y}$ such taht $S\subset R$ and
	$$|\mf_S|\le{x-t+1\b1}^{t-y}|\mf_R|.$$
\end{lem}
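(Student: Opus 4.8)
The statement is a direct analogue of \cite[Lemma 2.4]{CAO} for the symplectic polar space, so the plan is to mimic that argument, replacing counting in the Grassmannian ${V\b k}$ by counting in $\mp$. The idea is an iterative ``uncrossing'' procedure: starting from $S$, I will enlarge it $t-y$ times, one dimension at a time, each time picking a vector that witnesses a missing intersection, and I will control how much the sub-family shrinks at each step. First I set up the base case: since $\dim(X\cap S)=y<t$, there is some $F_0\in\mf_S$ (assuming $\mf_S\ne\emptyset$, otherwise the bound is trivial) with $\dim(X\cap F_0)\ge t$ because $X$ is a $t$-cover; in particular $X\cap F_0$ is not contained in $S$, so we can pick a vector $v\in (X\cap F_0)\setminus S$. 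I then replace $S$ by $S_1:=\langle S,v\rangle$; one checks $S_1$ is still totally isotropic (it sits inside the totally isotropic space $F_0+S$? — actually one must be slightly careful here, since $S+F_0$ need not be totally isotropic, so instead I will take $v\in X\cap F$ for a cleverly chosen $F$, or argue that $\langle S, v\rangle$ with $v\in X\cap F$ is totally isotropic because $v\in F$ and $F\cap X$ meets $S$ appropriately — this is the point requiring care, see below).

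The key bookkeeping step is the bound on $|\mf_S|$ in terms of $|\mf_{S_1}|$. Here I partition $\mf_S$ according to the behaviour of each $F$ near $X$: since every $F\in\mf_S$ has $\dim(F\cap X)\ge t$ but $\dim(S\cap X)=y$, the space $F\cap X$ contains a point outside $S$. I count the totally isotropic $(y+1)$-subspaces $W$ with $S\subset W\subseteq X\cap(\text{something})$; the number of candidate one-dimensional extensions of $S$ inside $X$ is at most ${x-y\b1}$, and after refining the argument (as in \cite{CAO}, using that $X\cap S$ already accounts for $y$ of the needed $t$ dimensions) the correct per-step factor is ${x-t+1\b1}$. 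Applying this $t-y$ times and observing that after $t-y$ steps the enlarged space $R\supseteq S$ has dimension $s+(t-y)$ and is totally isotropic, hence $R\in\mp_{s+t-y}$, yields
$$|\mf_S|\le {x-t+1\b1}^{t-y}|\mf_R|.$$

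I expect the main obstacle to be exactly the isotropy issue: in the vector-space setting of \cite{CAO} any subspace works, but here every intermediate space $S=S_0\subset S_1\subset\dots\subset S_{t-y}=R$ must stay totally isotropic, so at each step the vector $v$ I adjoin cannot be arbitrary — it must be chosen from $X\cap F$ for a witness $F$ in such a way that $\langle S_j, v\rangle$ remains isotropic. The fix is to choose, at step $j$, a witness $F\in\mf_{S_j}$ with $\dim(F\cap X)\ge t$, note $\dim(F\cap X\cap S_j)\le \dim(S_j\cap X) = y+j < t$, so $F\cap X \not\subseteq S_j$, and pick $v\in (F\cap X)\setminus S_j$; then $S_j\subseteq F$ would force isotropy, but $S_j\not\subseteq F$ in general. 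So instead I will track the totally isotropic subspace $S_j\cap F$ and argue that the extension can be taken inside $F$ itself wherever possible, and otherwise re-invoke Lemma \ref{gaosifangsuo} to absorb the discrepancy into the stated ${x-t+1\b1}$ factor; making this uncrossing compatible with isotropy, and verifying that the constant is not worsened, is the technical heart of the proof. Everything else — the geometric counting estimates — follows from Lemma \ref{gaosifangsuo} and the formula for $N'(m_1;m;2\nu)$.
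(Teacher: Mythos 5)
The paper does not actually prove this lemma (it is quoted as a consequence of \cite[Lemma 2.4]{CAO}), so your argument has to stand on its own, and as written it has a genuine gap at the central counting step. Your overall plan --- extend $S$ one dimension at a time, $t-y$ times, losing a factor ${x-t+1\b1}$ per step --- is the right one, but you never supply the idea that makes the per-step factor ${x-t+1\b1}$ rather than ${x\b1}$ or ${x-y\b1}$. The point is to fix a subspace $Y\le X$ with $\dim Y=x-t+1$ and $Y\cap S=0$ (possible because $\dim(X\cap S)=y\le t-1$, so $y+(x-t+1)\le x$). Then for every $F\in\mf_S$ one has $\dim(F\cap X\cap Y)\ge\dim(F\cap X)+\dim Y-\dim X\ge t+(x-t+1)-x=1$, so $F$ contains a point $P$ of $Y$, necessarily outside $S$; hence $\mf_S\subseteq\bigcup_{P\in{Y\b1}}\mf_{S+P}$ and $|\mf_S|\le{x-t+1\b1}\max_P|\mf_{S+P}|$. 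Writing ``after refining the argument the correct per-step factor is ${x-t+1\b1}$'' without this choice of $Y$ is precisely the missing idea.

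Second, the isotropy difficulty you devote your last paragraph to is not a real difficulty, and your proposed fix rests on a false premise. You assert that ``$S_j\subseteq F$ would force isotropy, but $S_j\not\subseteq F$ in general'' --- but the witness $F$ is taken from $\mf_{S_j}$, which by definition consists of the members of $\mf$ \emph{containing} $S_j$; so $S_j\subseteq F$ always, the adjoined point $P$ lies in $F\cap X\subseteq F$, and $S_j+P\subseteq F$ is automatically totally isotropic (points $P\in Y$ for which $S_j+P$ fails to be totally isotropic contribute nothing, since then $\mf_{S_j+P}=\emptyset$). The vague workaround (``track $S_j\cap F$ \dots\ re-invoke Lemma \ref{gaosifangsuo} to absorb the discrepancy'') is therefore unnecessary and, as stated, not a proof. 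Finally, your sketch omits the bookkeeping that justifies landing exactly in $\mp_{s+t-y}$: for $P\le X$ one has $X\cap(S_j+P)=(X\cap S_j)+P$, so each step raises $\dim(X\cap S_j)$ by exactly one, and after exactly $t-y$ steps one reaches $R$ with $\dim R=s+t-y$ and $\dim(X\cap R)=t$.
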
	

\begin{lem}\label{sp-yibanshangjie}
	Let $\nu$, $m_1$, $m_2$ and $t$ be positive integers with $\nu>m_i\ge t$, $i\in\{1,2\}$,  and $2\nu\ge2m_1+m_2-t$. Suppose $\mf\subset\mmp$ and $\mg\subset\mmmp$ are cross $t$-intersecting. Then
	$$|\mf|\le{\tau_t(\mf)\b t}{m_2-t+1\b1}^{\tau_t(\mg)-t}N'(\tau_t(\mg);m_1;2\nu).$$
\end{lem}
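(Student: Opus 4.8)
The plan is to bound $|\mf|$ in two stages: first reduce the problem to counting the members of $\mf$ through a single fixed $t$-dimensional totally isotropic subspace, and then bound that count by iterating Lemma~\ref{sp-fangsuo}. We may assume $\mf$ and $\mg$ are nonempty (otherwise $|\mf|=0$ or the statement is vacuous). Since the cross $t$-intersecting hypothesis says precisely that each $G\in\mg$ is a $t$-cover of $\mf$ and each $F\in\mf$ is a $t$-cover of $\mg$, the parameters $\tau_t(\mf)$ and $\tau_t(\mg)$ are well defined, with $t\le\tau_t(\mf)\le m_2$ and $t\le\tau_t(\mg)\le m_1$. Throughout, for a totally isotropic subspace $A$ write $\mf_A=\{F\in\mf:A\subset F\}$.

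\emph{Stage 1.} I would take $X$ to be a minimum $t$-cover of $\mf$, so $X$ is totally isotropic of dimension $\tau_t(\mf)$. Each $F\in\mf$ satisfies $\dim(F\cap X)\ge t$, hence $F\cap X$ (a totally isotropic subspace of dimension $\ge t$) contains a totally isotropic $t$-subspace $T$ of $X$, and $F\in\mf_T$. Therefore $\mf=\bigcup_T\mf_T$, where $T$ runs over the $t$-subspaces of $X$ (there are at most ${\tau_t(\mf)\b t}$ of them), so
\[
|\mf|\le{\tau_t(\mf)\b t}\,\max_T|\mf_T| .
\]
It then suffices to show $|\mf_T|\le{m_2-t+1\b1}^{\tau_t(\mg)-t}\,N'(\tau_t(\mg);m_1;2\nu)$ for every totally isotropic $t$-subspace $T$ with $\mf_T\neq\emptyset$.

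\emph{Stage 2.} For a fixed such $T$, I would build a strictly increasing chain of totally isotropic subspaces $T=R_0\subsetneq R_1\subsetneq\cdots$ as follows. Given $R_j$ with $\dim R_j\le m_1$: if $R_j$ is a $t$-cover of $\mg$, stop; otherwise choose $G_j\in\mg$ with $y_j:=\dim(R_j\cap G_j)<t$, and apply Lemma~\ref{sp-fangsuo} to the family $\mf$, the $t$-cover $G_j$ (of dimension $m_2$), and the subspace $S=R_j$, obtaining $R_{j+1}\in\mp_{\dim R_j+t-y_j}$ with $R_j\subset R_{j+1}$ and
\[
|\mf_{R_j}|\le{m_2-t+1\b1}^{\,t-y_j}\,|\mf_{R_{j+1}}| .
\]
If ever $\dim R_{j+1}>m_1$ then $\mf_{R_{j+1}}=\emptyset$ and the chain of inequalities forces $|\mf_T|=0$, so we are done; hence we may assume all $R_j$ have dimension at most $m_1$, and then the numerical hypotheses of Lemma~\ref{sp-fangsuo} hold since $t\le\dim R_j\le m_1<\nu$. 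As each step raises the dimension by $t-y_j\ge1$, the chain terminates, say at $R:=R_k$, which is a totally isotropic $t$-cover of $\mg$; thus $\tau_t(\mg)\le\dim R\le m_1$. Multiplying along the chain, using $\dim R-t=\sum_j(t-y_j)$ and $|\mf_R|\le N'(\dim R;m_1;2\nu)$, one gets
\[
|\mf_T|\le{m_2-t+1\b1}^{\,\dim R-t}\,N'(\dim R;m_1;2\nu) .
\]

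\emph{Final step and main difficulty.} It remains to check that over-shooting $\tau_t(\mg)$ is harmless: writing $r=\tau_t(\mg)$ and $d=\dim R\ge r$, one needs
\[
{m_2-t+1\b1}^{\,d-r}\le\frac{N'(r;m_1;2\nu)}{N'(d;m_1;2\nu)}=\prod_{i=m_1-d+1}^{\,m_1-r}\frac{q^{2(\nu-m_1+i)}-1}{q^{i}-1} .
\]
By Lemma~\ref{gaosifangsuo}, each of the $d-r$ factors on the right exceeds $q^{2(\nu-m_1+i)-i}=q^{2\nu-2m_1+i}\ge q^{2\nu-2m_1+1}$ (here $i\ge m_1-d+1\ge1$ because $d\le m_1$), whereas ${m_2-t+1\b1}<q^{m_2-t+1}$; so the inequality follows from $m_2-t+1\le 2\nu-2m_1+1$, i.e.\ from the hypothesis $2\nu\ge2m_1+m_2-t$. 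Combined with Stage 1, this proves the lemma. I expect the main obstacle to be exactly this last comparison, since it is the only place the dimension hypothesis enters: one must show that each unit by which the chain overshoots $\tau_t(\mg)$ costs at most a factor ${m_2-t+1\b1}$ yet is compensated by the decrease of $N'$, and that balance is precisely what $2\nu\ge2m_1+m_2-t$ guarantees. The remaining points — termination of the chain, keeping its dimensions $\le m_1$, and verifying the hypotheses of Lemma~\ref{sp-fangsuo} along the way — are routine bookkeeping.
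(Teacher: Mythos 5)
Your proposal is correct and follows essentially the same route as the paper's proof: decompose $\mf$ over the $t$-subspaces of a minimum $t$-cover, iterate Lemma~\ref{sp-fangsuo} against members of $\mg$ witnessing non-covering to climb to a totally isotropic subspace of dimension at least $\tau_t(\mg)$ (and at most $m_1$), and then absorb the overshoot via the single-step ratio $N'(a;m_1;2\nu)/N'(a+1;m_1;2\nu)\ge q^{2\nu-2m_1+1}\ge{m_2-t+1\b1}$, which is exactly where the hypothesis $2\nu\ge2m_1+m_2-t$ enters in the paper as well. The only cosmetic difference is your stopping rule (terminate when the chain becomes a $t$-cover of $\mg$, rather than when its dimension first reaches $\tau_t(\mg)$), which changes nothing in the final estimate.
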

	\begin{proof}
	Let $S$ be a $t$-cover of $\mf$ with dimension $\tau_t(\mf)$. From
	\begin{equation}\label{sp-yibanshangjie-f1}
		\mf=\bigcup\limits_{W\in{S\b t}}\mf_W,
	\end{equation} 
	we get
	$$|\mf|\le{\tau_t(\mf)\b t}N'(t;m_1;2\nu),$$
	which implies that the desired result holds for $\tau_t(\mg)=t$. In the following, we assume that $\tau_t(\mg)>t$.

	Let $W_1\in{S\b t}$ with $\mf_{W_1}\neq\emptyset$. We first give an upper bound for $|\mf_{W_1}|$.
Since $\tau_t(\mg)>t$, there exists $G\in\mg$ such that $\dim(G\cap W_1)<t$. Notice that $G$ is a $t$-cover of $\mf$. By Lemma \ref{sp-fangsuo}, there exists a $(2t-\dim(W_1\cap G))$-dimensional totally isotropic subspace  $W_2$ such that
$$|\mf_{W_1}|\le{m_2-t+1\b1}^{\dim W_2-\dim W_1}|\mf_{W_2}|.$$
By $|\mf_{W_1}|>0$, we have $|\mf_{W_2}|>0$, which implies that $\dim W_2\le m_1$.
If $\dim W_2<\tau_t(\mg)$, there exists $G'\in\mg$ with $\dim(W_2\cap G')<t$. 
Using Lemma \ref{sp-fangsuo} repeatedly, we get a series of totally isotropic subspaces $W_1,W_2,\dots,W_u$ with $\dim W_{u-1}<\tau_t(\mg)\le\dim W_u\le m_1$ and 
$$|\mf_{W_i}|\le{m_2-t+1\b1}^{\dim W_{i+1}-\dim W_i}|\mf_{W_{i+1}}|$$
for each $i\in\{1,\dots,u-1\}$.
Hence
$$|\mf_{W_1}|\le{m_2-t+1\b1}^{\dim W_u-t}|\mf_{W_u}|\le{m_2-t+1\b1}^{\dim W_u-t}N'(\dim W_u;m_1;2\nu).$$
From Lemma \ref{gaosifangsuo} and $2\nu\ge2m_1+m_2-t$, for each $a\in\{0,\dots,m_1-1\}$, we obtain
$$\dfrac{N'(a;m_1;2\nu)}{N'(a+1;m_1;2\nu)}=\dfrac{q^{2(\nu-a)}-1}{q^{m_1-a}-1}\ge q^{2\nu-2m_1+1}\ge q^{m_2-t+1}\ge{m_2-t+1\b1}.$$
Note that $\dim W_u\ge\tau_t(\mg)$. We have
\begin{equation*}
	\begin{aligned}
		|\mf_{W_1}|\le{m_2-t+1\b1}^{\dim W_u-t}N'(\dim W_u;m_1;2\nu)\le{m_2-t+1\b1}^{\tau_t(\mg)-t}N'(\tau_t(\mg);m_1;2\nu).
	\end{aligned}
\end{equation*}
Together with \eqref{sp-yibanshangjie-f1}, we get
$$|\mf|\le\sum_{W\in{S\b t}}|\mf_W|\le{\tau_t(\mf)\b t}{m_2-t+1\b1}^{\tau_t(\mg)-t}N'(\tau_t(\mg);m_1;2\nu),$$
as desired.
\end{proof}

\section{Proof of Theorem \ref{trivial}}

To prove Theorem \ref{trivial}, we need the following two lemmas.
\begin{lem}\label{sp-dijian}
	Let $\nu$, $b$, $c$ and $t$ be positive integers with $2\nu\ge 2b+c+1$ and $\nu>b\ge t+1$, $c\ge t$. For $x\in\{t,\dots,b\}$, let
	$$g_{b,c}(x)={x\b t}{c-t+1\b1}^{x-t}N'(x;b;2\nu).$$
	Then $g_{b,c}(x)$ is decreasing with respect to $x$.
\end{lem}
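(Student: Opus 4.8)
The plan is to prove that $g_{b,c}$ is strictly decreasing on $\{t,\dots,b\}$ by showing that $g_{b,c}(x+1)/g_{b,c}(x)<1$ for every $x\in\{t,\dots,b-1\}$. For such $x$ all three factors ${x\b t}$, ${c-t+1\b1}^{x-t}$ and $N'(x;b;2\nu)$ are positive (the last being the empty product $1$ when $x=b$), so the quotient is well defined, and after cancelling the middle factor it equals
$$\frac{g_{b,c}(x+1)}{g_{b,c}(x)}=\frac{{x+1\b t}}{{x\b t}}\cdot{c-t+1\b1}\cdot\frac{N'(x+1;b;2\nu)}{N'(x;b;2\nu)}.$$
Thus it suffices to estimate these three quantities separately.

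First I would record the elementary ratios coming straight from the definitions: $\dfrac{{x+1\b t}}{{x\b t}}=\dfrac{q^{x+1}-1}{q^{x+1-t}-1}$, and, exactly as in the proof of Lemma \ref{sp-yibanshangjie}, $\dfrac{N'(x;b;2\nu)}{N'(x+1;b;2\nu)}=\dfrac{q^{2(\nu-x)}-1}{q^{b-x}-1}$ (the highest-index term in the product defining $N'(x;b;2\nu)$). Then I would bound each of them using Lemma \ref{gaosifangsuo}(1): since $t\ge1$ one gets $\dfrac{q^{x+1}-1}{q^{x+1-t}-1}<q^{t+1}$; since $c\ge t$ one gets ${c-t+1\b1}<q^{c-t+1}$ (the degenerate case $c=t$ merely saying $1<q$); and since $b<\nu$ we have $b-x<2(\nu-x)$ for all $x\le b-1$, whence $\dfrac{q^{2(\nu-x)}-1}{q^{b-x}-1}>q^{2(\nu-x)-(b-x)}=q^{2\nu-x-b}\ge q^{2\nu-2b+1}$, the final step using $x\le b-1$.

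Combining the three estimates yields
$$\frac{g_{b,c}(x+1)}{g_{b,c}(x)}<q^{t+1}\cdot q^{c-t+1}\cdot q^{-(2\nu-2b+1)}=q^{2b+c+1-2\nu}\le q^{0}=1,$$
the last inequality being precisely the hypothesis $2\nu\ge2b+c+1$; this proves the claim. I do not expect a genuine obstacle here. The only points that need a little care are the bookkeeping that keeps every factor positive so that the quotient makes sense, the two boundary instances ($x+1-t=1$ and $c-t+1=1$) where Lemma \ref{gaosifangsuo}(1) is applied at the edge of its range or replaced by a trivial estimate, and keeping all inequalities strict so that $g_{b,c}$ comes out strictly decreasing even in the extremal case $2\nu=2b+c+1$.
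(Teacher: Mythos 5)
Your proposal is correct and follows essentially the same route as the paper: compute the one-step ratio $g_{b,c}(x+1)/g_{b,c}(x)=\frac{(q^{x+1}-1)(q^{c-t+1}-1)(q^{b-x}-1)}{(q^{x-t+1}-1)(q-1)(q^{2(\nu-x)}-1)}$ and bound it above by $q^{2b+c+1-2\nu}\le1$ using Lemma \ref{gaosifangsuo}(1) together with $x\le b-1$. The extra care you take with the boundary cases and positivity is harmless but not needed beyond what the paper's one-line estimate already gives.
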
	
\begin{proof}
	By Lemma \ref{gaosifangsuo} and $2\nu\ge 2b+c+1$, for each $x\in\{t,\dots,b-1\}$, we have
	$$\dfrac{g_{b,c}(x+1)}{g_{b,c}(x)}=\dfrac{(q^{x+1}-1)(q^{c-t+1}-1)(q^{b-x}-1)}{(q^{x-t+1}-1)(q-1)(q^{2(\nu-x)}-1)}<q^{2b+c+1-2\nu}\le1.$$
	Then $g_{b,c}(x+1)<g_{b,c}(x)$, as desired.
\end{proof}

\begin{lem}\label{sp-cover}
	Let $\nu$, $m_1$, $m_2$ and $t$ be positive integers with $\nu>m_i\ge t$, $i\in\{1,2\}$. Suppose $\mf_1\subset\mp_{m_1}$ and $\mf_2\subset\mp_{m_2}$ are cross $t$-intersecting. For each $i\in\{1,2\}$, let $\ms_i$ denote the set of all $t$-covers of $\mf_i$ with dimension $\tau_t(\mf_i)$. Then $\ms_1$ and $\ms_2$ are cross $t$-intersecting.
\end{lem}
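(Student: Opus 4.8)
I would argue by contradiction, using Lemma~\ref{sp-fangsuo} and the fact that the cross $t$-intersecting condition couples the two families through their covers. The coupling to record first: since $\dim(F_1\cap F_2)\ge t$ for all $F_1\in\mf_1$ and $F_2\in\mf_2$, every member of $\mf_2$ is a $t$-cover of $\mf_1$ and every member of $\mf_1$ is a $t$-cover of $\mf_2$; hence $\tau_t(\mf_1)\le m_2$ and $\tau_t(\mf_2)\le m_1$, so each $S_1\in\ms_1$ and $S_2\in\ms_2$ is a totally isotropic subspace of dimension at most $m_1<\nu$ --- exactly the range in which Lemma~\ref{sp-fangsuo} may be applied with ambient family $\mf_1$ or $\mf_2$. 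I would also isolate the \emph{minimality lever}: if $X$ is a $t$-cover of a family $\mh$ of the smallest possible dimension $\tau_t(\mh)$, then the $t$-subspaces of $X$ lying in some member of $\mh$ span $X$, for otherwise their span would be a $t$-cover of $\mh$ of strictly smaller dimension.

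Now suppose, for a contradiction, that $y:=\dim(S_1\cap S_2)<t$ for some $S_1\in\ms_1$, $S_2\in\ms_2$. If $S_2$ is not a $t$-cover of $\mf_1$, pick $F\in\mf_1$ with $\dim(F\cap S_2)<t$; since $F$ is a $t$-cover of $\mf_2$, Lemma~\ref{sp-fangsuo} applied to $\mf_2$ with the $t$-cover $F$ and the subspace $S_2$ yields a strictly larger totally isotropic $R\supsetneq S_2$ together with the estimate $|(\mf_2)_{S_2}|\le{m_2-t+1\b1}^{\,t-\dim(F\cap S_2)}|(\mf_2)_R|$. Iterating exactly as in the proof of Lemma~\ref{sp-yibanshangjie} drives $S_2$ up along a chain of strictly increasing totally isotropic subspaces until it reaches one contained in a member of $\mf_2$; chaining the resulting inequalities and comparing with the minimality of $\dim S_2$ then contradicts that minimality, possibly after exchanging the roles of $\mf_1$ and $\mf_2$. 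In the complementary case $S_2$ \emph{is} a $t$-cover of $\mf_1$, whence $\tau_t(\mf_1)=\dim S_1\le\dim S_2=\tau_t(\mf_2)$; the symmetric argument forces $\tau_t(\mf_1)=\tau_t(\mf_2)$, and this balanced case I would settle by comparing, via Lemma~\ref{gaosifangsuo}, the two size bounds that Lemma~\ref{sp-yibanshangjie} gives for $|\mf_1|$ and $|\mf_2|$ (and, when the hypothesis $2\nu\ge 2m_1+m_2+1$ of Theorem~\ref{trivial} is available, the monotonicity of Lemma~\ref{sp-dijian}).

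The step I expect to be the main obstacle is making this chaining actually close. Lemma~\ref{sp-fangsuo} bounds $|\mf_S|$ from \emph{above} by a Gaussian-coefficient multiple of $|\mf_R|$ with $R\supsetneq S$ --- that is, in the direction opposite to the one the minimality of $\dim S_2$ supplies --- so one must track carefully which of the subfamilies $(\mf_i)_S$ are nonempty and feed in the global size bound of Lemma~\ref{sp-yibanshangjie} so that the two inequalities point the same way. A secondary difficulty is the balanced case $\tau_t(\mf_1)=\tau_t(\mf_2)$, in which members of one family cannot be used to shrink the other's cover, and where one has to exploit the numerical slack coming from $\nu>m_i$.
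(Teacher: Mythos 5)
Your plan does not close, and the obstacle you flag at the end is not a technical nuisance but a symptom that the route is wrong. The paper's proof is not a counting argument at all. Given $S_1\in\ms_1$ and $S_2\in\ms_2$, it \emph{constructs} $F_1\in\mp_{m_1}$ with $S_2\subset F_1$ and $F_2\in\mp_{m_2}$ with $S_1\subset F_2$ such that $F_1\cap F_2=S_1\cap S_2$: one first extends $S_2$ to two totally isotropic subspaces $Y_1,Y_2$ of dimension $\tau_t(\mf_2)+1$ with $Y_1\cap Y_2=S_2$ and not both contained in a common maximal totally isotropic subspace, so that some $Y_k$ meets $S_1$ exactly in $S_1\cap S_2$, and then continues in the same way up to dimensions $m_1$ and $m_2$. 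Since $F_1$ contains the $t$-cover $S_2$ of $\mf_2$, the pair $(\mf_1\cup\{F_1\},\mf_2)$ is still cross $t$-intersecting, so by maximality $F_1\in\mf_1$; likewise $F_2\in\mf_2$; hence $\dim(S_1\cap S_2)=\dim(F_1\cap F_2)\ge t$. The two ingredients you are missing are precisely these: (i) the maximality of the families (not stated in the lemma but used in its proof, and available in the application to families of maximum product of sizes), and (ii) a geometric extension producing actual members of the families whose intersection equals $S_1\cap S_2$.

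Your approach cannot be repaired, for a concrete reason: it never invokes maximality, and without maximality the statement is false. Take $\mf_1=\{F\}$ and $\mf_2=\{G\}$ with $\dim(F\cap G)=t$ and $m_1>t$; then $\tau_t(\mf_i)=t$, $\ms_1={F\b t}$ and $\ms_2={G\b t}$, and these are not cross $t$-intersecting. So any argument relying only on the stated hypotheses is doomed. Beyond this, your first branch never uses the assumption $\dim(S_1\cap S_2)<t$, so no contradiction with it can emerge; and Lemma \ref{sp-fangsuo} only bounds $|(\mf_2)_{S_2}|$ from above by a multiple of $|(\mf_2)_R|$ for a strictly larger $R$, which is information about sizes of subfamilies --- the right tool for Lemma \ref{sp-yibanshangjie} --- but carries no information about whether two particular minimum covers meet in dimension at least $t$. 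The ``balanced case'' comparison of the two upper bounds from Lemma \ref{sp-yibanshangjie} likewise cannot yield the required geometric conclusion about $S_1\cap S_2$.
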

\begin{proof}
	Let $S_1\in\ms_1$ and $S_2\in\ms_2$. It is sufficient to show that $\dim(S_1\cap S_2)\ge t$.	
	
Since $\mf_1$ and $\mf_2$ are cross $t$-intersecting, we have $\dim S_2=\tau_t(\mf_2)\le m_1<\nu$. 	
Then there exist two $(\tau_t(\mf_2)+1)$-dimensional totally isotropic subspaces $Y_1$ and $Y_2$ containing $S_2$  such that $S_2=Y_1\cap Y_2$ and there exists no maximal totally isotropic subspace contains both of them. 	
Therefore, there exists $k\in\{1,2\}$ such that $Y_k\cap S_1=S_2\cap S_1$. Similarly, it is routine to check that there exist $F_1\in\mmp$ and $F_2\in\mmmp$ such that $S_1\subset F_2$, $S_2\subset F_1$ and $F_1\cap F_2=S_1\cap S_2$.	
By the maximality of $\mf_1$ and $\mf_2$, since $\mf_1\cup\{F_1\}$ and $\mf_2$ are still cross $t$-intersecting, we have $F_1\in\mf_1$. Similarly, we have $F_2\in\mf_2$.
Thus $\dim(S_1\cap S_2)=\dim(F_1\cap F_2)\ge t$, as desired.
\end{proof}	

\begin{proof}[\bf Proof of Theorem \ref{trivial}]
	
	Suppose $\mf_1\subset\mp_{m_1}$ and $\mf_2\subset\mp_{m_2}$ are cross $t$-intersecting families.
	Assume that $\tau_t(\mf_1)=\tau_t(\mf_2)=t$. Let $T_1$ and $T_2$ be $t$-covers of $\mf_1$ and $\mf_2$ with dimension $t$, respectively. By Lemma \ref{sp-cover} , we have $T_1=T_2:=T$.
	Then 
	$$|\mf_1|\le N'(t;m_1;2\nu),\quad|\mf_2|\le N'(t;m_2;2\nu),$$
	and two equalities hold at the same time if and only if $\mf_i=\{F\in\mp_{m_i}:  T\subset F\}$ for each $i\in\{1,2\}$.

To finish our proof, it is sufficient to show 
\begin{equation}\label{sp-f1f2<nn}
	|\mf_1||\mf_2|<N'(t;m_1;2\nu)N'(t;m_2;2\nu)
\end{equation}
if $(\tau_t(\mf_1),\tau_t(\mf_2))\neq(t,t)$. By Lemma \ref{sp-yibanshangjie} and $2\nu\ge2m_1+m_2+1$, $m_1\ge m_2$, we have
\begin{equation*}
	\begin{aligned}
		|\mf_1||\mf_2|\le&\left({\tau_t(\mf_1)\b t}{m_1-t+1\b1}^{\tau_t(\mf_1)-t}N'(\tau_t(\mf_1);m_2;2\nu)\right)\\
		&\cdot\left({\tau_t(\mf_2)\b t}{m_2-t+1\b1}^{\tau_t(\mf_2)-t}N'(\tau_t(\mf_2);m_1;2\nu)\right).
	\end{aligned}
\end{equation*}
Note that $t\le\tau_t(\mf_1)\le m_2$ and $t\le\tau_t(\mf_2)\le m_1$.
Together with $(\tau_t(\mf_1),\tau_t(\mf_2))\neq(t,t)$ and Lemma \ref{sp-dijian}, \eqref{sp-f1f2<nn} follows.	
\end{proof}

Based on Theorem \ref{trivial},  we obtain a more general theorem.

\begin{thm}\label{trivial2}
	Let $d$, $\nu$, $t$, $m_1$,\dots, $m_d$ be positive integers with $d\ge2$, $m_1\ge m_2\ge\cdots\ge m_d\ge t$ and $2\nu\ge 2m_1+m_2+1$. If $\mf_1\subset\mmp$,\dots, $\mf_d\subset\mp_{m_d}$ satisfy that $\dim(F_1\cap\cdots\cap F_d)\ge t$ 
	for any $F_i\in\mf_i$, $i=1,\dots,d$. If $\prod_{i=1}^d|\mf_i|$ reaches to the maximum value, then there exists  $T\in\mp_t$ such that $\mf_i=\{F\in\mp_{m_i}: T\subset F\}$ for each $i\in\{1,\dots,d\}$.
\end{thm}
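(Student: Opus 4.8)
The plan is to bootstrap from the case $d=2$, i.e.\ Theorem \ref{trivial}, by an averaging argument over all $\binom d2$ pairs of families followed by a short ``gluing'' step; the argument is uniform in $d\ge2$ and recovers Theorem \ref{trivial} when $d=2$. Assume $\prod_{i=1}^d|\mf_i|$ is maximal. The configuration in which every member passes through a fixed $T\in\mp_t$ satisfies the hypothesis and has positive product $\prod_{i=1}^dN'(t;m_i;2\nu)$, so all $\mf_i$ are nonempty and $\prod_{i=1}^d|\mf_i|\ge\prod_{i=1}^dN'(t;m_i;2\nu)$. Now fix $i\neq j$: for any $F_i\in\mf_i$, $F_j\in\mf_j$ one has $F_i\cap F_j\supseteq F_1\cap\cdots\cap F_d$, so $\mf_i$ and $\mf_j$ are cross $t$-intersecting, and after ordering the pair so that the larger of $m_i,m_j$ comes first the assumptions $m_1\ge\cdots\ge m_d\ge t$ and $2\nu\ge2m_1+m_2+1$ guarantee that Theorem \ref{trivial} applies. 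Since that theorem says the maximum product of sizes of cross $t$-intersecting families in $\mp_{m_i}$ and $\mp_{m_j}$ is attained by a trivial pair and hence equals $N'(t;m_i;2\nu)N'(t;m_j;2\nu)$, we obtain
$$|\mf_i|\,|\mf_j|\le N'(t;m_i;2\nu)\,N'(t;m_j;2\nu)\qquad\text{for all }i\neq j.$$

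Multiplying these $\binom d2$ inequalities, the left-hand side becomes $\bigl(\prod_{i=1}^d|\mf_i|\bigr)^{d-1}$, so $\prod_{i=1}^d|\mf_i|\le\prod_{i=1}^dN'(t;m_i;2\nu)$, and together with the lower bound above every inequality in sight is an equality. In particular, for each pair $\{i,j\}$ the families $\mf_i,\mf_j$ achieve the maximum product for cross $t$-intersecting families in $\mp_{m_i}$ and $\mp_{m_j}$, so by Theorem \ref{trivial} there is $T_{ij}\in\mp_t$ with $T_{ij}\subseteq F$ for every $F\in\mf_i\cup\mf_j$. Hence $\mf_i\subseteq\{F\in\mp_{m_i}:T_{ij}\subseteq F\}$ and $\mf_j\subseteq\{F\in\mp_{m_j}:T_{ij}\subseteq F\}$; comparing with $|\mf_i|\,|\mf_j|=N'(t;m_i;2\nu)N'(t;m_j;2\nu)$ forces both inclusions to be equalities and $|\mf_i|=N'(t;m_i;2\nu)$ for every $i$.

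It remains to glue the subspaces $T_{ij}$, which is vacuous when $d=2$. Suppose $d\ge3$, fix $i$ and two indices $j\neq j'$ both distinct from $i$; by the previous paragraph $\{F\in\mp_{m_i}:T_{ij}\subseteq F\}=\mf_i=\{F\in\mp_{m_i}:T_{ij'}\subseteq F\}$. If $T_{ij}\neq T_{ij'}$, then $U:=T_{ij}+T_{ij'}$ has dimension at least $t+1$ and is contained in every member of $\mf_i$, so $\mf_i\subseteq\{F\in\mp_{m_i}:U\subseteq F\}$; but the latter set is empty unless $U$ is totally isotropic with $\dim U\le m_i$, in which case it has at most $N'(t+1;m_i;2\nu)<N'(t;m_i;2\nu)=|\mf_i|$ members, so either way we reach a contradiction. (The strict inequality is immediate from the product formula for $N'$ and the inequality $2\nu>m_i+t$, a consequence of $2\nu\ge2m_1+m_2+1$.) Hence $T_{ij}$ does not depend on $j$; writing $T_i$ for this common value and using $T_{ij}=T_{ji}$ gives $T_i=T_j$ for all $i,j$, say equal to $T\in\mp_t$, and then $\mf_i=\{F\in\mp_{m_i}:T\subseteq F\}$ for every $i$, as required. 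The only steps with genuine content are invoking Theorem \ref{trivial} and the final gluing; the former is used as a black box and the latter is short, so the main care needed is the routine bookkeeping ensuring that the dimension hypotheses of Theorem \ref{trivial} hold for each pair, and I do not expect a serious obstacle.
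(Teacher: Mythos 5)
Your proposal is correct and follows essentially the same route as the paper: apply Theorem \ref{trivial} to each of the $\binom{d}{2}$ pairs, multiply the resulting inequalities to match the product achieved by a trivial configuration, deduce equality and hence triviality of every pair via some $T_{ij}$, and then glue the $T_{ij}$ by noting that $T_{ij}\neq T_{ij'}$ would force $|\mf_i|\le N'(t+1;m_i;2\nu)<N'(t;m_i;2\nu)$. The dimension bookkeeping you flag does go through, since $2m_i+m_j+1\le 2m_1+m_2+1\le 2\nu$ for any pair $i<j$.
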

\begin{proof}
	For distinct $i,j\in\{1,\dots,d\}$, $\mf_i$ and $\mf_j$ are cross $t$-intersecting families. Then by
	Theorem \ref{trivial}, we have
	$$|\mf_i||\mf_j|\le N'(t;m_i;2\nu)N'(t;m_j;2\nu).$$
	Therefore
\begin{equation}\label{sp-cross-prod}
	\begin{aligned}
		\left(\prod_{s=1}^d|\mf_s|\right)^{d-1}&=\prod_{1\le i<j\le d}|\mf_i||\mf_j|\\
		&\le\prod_{1\le i<j\le d}N'(t;m_i;2\nu)N'(t;m_j;2\nu)\\
		&=\left(\prod_{s=1}^dN'(t;m_s;2\nu)\right)^{d-1},
	\end{aligned}
\end{equation}
and equality holds if and only if $|\mf_i||\mf_j|=N'(t;m_i;2\nu)N'(t;m_j;2\nu)$ for any distinct $i,j\in\{1,\dots,d\}$.

	Note that the product of sizes of families $\{F\in\mp_{m_i}: S\subset F\}$, $i=1,\dots,d$, where $S\in\mp_t$, reaches to the upper bound of \eqref{sp-cross-prod}. 
Therefore, by assumption and Theorem \ref{trivial}, for distinct $i,j\in\{1,\dots,d\}$, there exists $T_{i,j}\in\mp_t$ such that 
$$\mf_i=\{F\in\mp_{m_i}:  T_{i,j}\subset F\},\quad\mf_j=\{F\in\mp_{m_j}:  T_{i,j}\subset F\}.$$
If there exists $j'\in\{1,2,\dots,d\}$ such that $T_{i,j'}\neq T_{i,j}$, we have
$$\mf_i\subset\{F\in\mp_i:  T_{i,j}+T_{i,j'}\subset F\}.$$
	Together with $2\nu\ge2m_1+m_2+1$, $m_1\ge m_i$ and $\dim(T_{i,j}+T_{i,j'})\ge t+1$, we get
$$N'(t+1;m_i;2\nu)<N'(t;m_i;2\nu)=|\mf_i|\le N'(t+1;m_i;2\nu),$$
a contradiction.  Therefore, there exists $T\in\mp_t$ such that $T_{i,j}=T$ for any distinct $i,j\in\{1,\dots,d\}$. Then the desired result follows.
\end{proof}

\section{Proof of Theorem \ref{non-trivial}}	

Let $\nu>m_i\ge t+1$, $i\in\{1,2\}$. 
Suppose $M\in\mp_{m_2+1}$, $T\in\mm(t,M)$ and  $S\in\mp_{t+1}$. Let 	$\mmc_1(M,T;m_1,t)$, 
$\mmc_2(M,T;m_2)$, 
$\mmc_3(S;m_1)$ and
$\mmc_4(S;m_2,t)$ are families defined in \eqref{c1-c4}. By \cite[Theorem 3.11]{JS}, $|\mmc_1(M,T;m_1,t)|\cdot|\mmc_2(M,T;m_2)|$ and $|\mmc_3(S;m_1)|\cdot|\mmc_4(S;m_2,t)|$ are independent on the choice of $M$, $T$ and $S$. 	

Write
\begin{equation*}
	\begin{aligned}
		c_1(\nu,m_1,m_2,t)&=|\mmc_1(M,T;m_1,t)|\cdot|\mmc_2(M,T;m_2)|,\\
		c_2(\nu,m_1,m_2,t)&=|\mmc_3(S;m_1)|\cdot|\mmc_4(S;m_2,t)|.
	\end{aligned}
\end{equation*}	
It is routine to check that
\begin{equation}\label{cp3cp4-size}
	\dfrac{c_2(\nu,m_1,m_2,t)}{N'(t+1;m_1;2\nu)}={t+1\b1}N'(t;m_2;2\nu)-q{t\b1}N'(t+1;m_2;2\nu).
\end{equation}
	In the following, we show some inequalities for $c_1(\nu,m_1,m_2,t)$ and $c_2(\nu,m_1,m_2,t)$.

	For $S\in\mp_s$, $T\in{S\b t}$ and $j\in\{t,t+1,\dots,s\}$, write
\begin{equation*}\label{xla}
	\begin{aligned}
		\ml_j(S,T;m)&=\{(I,F)\in\mp_j\times\mp_m:  T\subset I\subset S, I\subset F\},\\
		\ma_j(S,T;m)&=\{F\in\mp_m:  T\subset F, \dim(F\cap S)=j\}
	\end{aligned}
\end{equation*}
and
$$s_0(\nu,m,s,t)={s-t\b1}N'(t+1;m;2\nu)-q{s-t\b2}N'(t+2;m;2\nu).$$

	\begin{lem}\label{c1>c0}
	Let $\nu$, $m_1$, $m_2$ and $t$ be positive integers with $\nu>m_i\ge t+1$, $i\in\{1,2\}$. 
	Then 
	$$c_1(\nu,m_1,m_2,t)>s_0(\nu,m_1,m_2+1,t)N'(t;m_2;2\nu).$$ 
	Moreover, if $2\nu\ge m_1+2m_2-t+4$,then
	$$c_1(\nu,m_1,m_2,t)>\left({m_2-t+1\b1}-q^{-2}\right)N'(t+1;m_1;2\nu)N'(t;m_2;2\nu).$$
\end{lem}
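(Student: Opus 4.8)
The quantity to bound is $c_1(\nu,m_1,m_2,t)=|\mmc_1(M,T;m_1,t)|\cdot|\mmc_2(M,T;m_2)|$, and the plan is to dispose of the factor $|\mmc_2(M,T;m_2)|$ cheaply and put all the work into a lower bound for $|\mmc_1(M,T;m_1,t)|$. For the cheap factor: $\{F\in\mp_{m_2}:T\subset F\}\subseteq\mmc_2(M,T;m_2)$ has exactly $N'(t;m_2;2\nu)$ members, and the inclusion is proper, since $t\ge1$ forces $M$ (of dimension $m_2+1$) to possess an $m_2$-subspace missing $T$, and that subspace lies in ${M\b m_2}\subseteq\mmc_2(M,T;m_2)$; hence $|\mmc_2(M,T;m_2)|>N'(t;m_2;2\nu)$. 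Also $\mmc_1(M,T;m_1,t)\neq\emptyset$ (extend any $(t+1)$-space between $T$ and $M$ to an $m_1$-dimensional totally isotropic space). So if we prove
$$|\mmc_1(M,T;m_1,t)|\ge s_0(\nu,m_1,m_2+1,t),$$
then $c_1(\nu,m_1,m_2,t)=|\mmc_1(M,T;m_1,t)|\,|\mmc_2(M,T;m_2)|>|\mmc_1(M,T;m_1,t)|\,N'(t;m_2;2\nu)\ge s_0(\nu,m_1,m_2+1,t)N'(t;m_2;2\nu)$, which is the first assertion; and if we prove in addition $s_0(\nu,m_1,m_2+1,t)>\bigl({m_2-t+1\b1}-q^{-2}\bigr)N'(t+1;m_1;2\nu)$, the ``moreover'' part follows by multiplying by $N'(t;m_2;2\nu)$.

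For the lower bound on $|\mmc_1(M,T;m_1,t)|$, take $S=M\in\mp_{m_2+1}$ in the auxiliary families $\ml_j(M,T;m_1)$ and $\ma_j(M,T;m_1)$. Counting $\ml_j(M,T;m_1)$ once by first choosing $I$ and once by summing over $F$ and grouping by $\dim(F\cap M)$ gives, for $j\ge t+1$,
$$|\ml_j(M,T;m_1)|={m_2-t+1\b j-t}N'(j;m_1;2\nu)=\sum_{k\ge t+1}{k-t\b j-t}\,|\ma_k(M,T;m_1)|,$$
while $\mmc_1(M,T;m_1,t)=\bigsqcup_{k\ge t+1}\ma_k(M,T;m_1)$ by definition, so $|\mmc_1(M,T;m_1,t)|=\sum_{k\ge t+1}|\ma_k(M,T;m_1)|$. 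Forming $|\ml_{t+1}(M,T;m_1)|-q|\ml_{t+2}(M,T;m_1)|$, which by the first identity is exactly $s_0(\nu,m_1,m_2+1,t)$, we get
$$s_0(\nu,m_1,m_2+1,t)=\sum_{k\ge t+1}\Bigl({k-t\b1}-q{k-t\b2}\Bigr)|\ma_k(M,T;m_1)|.$$
The elementary inequality ${k-t\b1}-q{k-t\b2}\le1$ holds for every $k\ge t+1$ (equality at $k=t+1$ and $k=t+2$; and for $k\ge t+3$ one has ${k-t\b2}\ge{k-t\b1}$, so the left-hand side is even $\le0$). Hence $s_0(\nu,m_1,m_2+1,t)\le\sum_{k\ge t+1}|\ma_k(M,T;m_1)|=|\mmc_1(M,T;m_1,t)|$, and the first assertion follows.

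For the ``moreover'' part it remains to show $q{m_2-t+1\b2}N'(t+2;m_1;2\nu)<q^{-2}N'(t+1;m_1;2\nu)$. From the explicit form of $N'$ we have $N'(t+1;m_1;2\nu)/N'(t+2;m_1;2\nu)=(q^{2(\nu-t-1)}-1)/(q^{m_1-t-1}-1)$, which by Lemma \ref{gaosifangsuo} exceeds $q^{2\nu-m_1-t-1}$, while Lemma \ref{gaosifangsuo} also gives $q^{3}{m_2-t+1\b2}<q^{2m_2-2t+3}$. The hypothesis $2\nu\ge m_1+2m_2-t+4$ is precisely $2m_2-2t+3\le2\nu-m_1-t-1$, so $q^{3}{m_2-t+1\b2}N'(t+2;m_1;2\nu)<N'(t+1;m_1;2\nu)$, i.e. $q{m_2-t+1\b2}N'(t+2;m_1;2\nu)<q^{-2}N'(t+1;m_1;2\nu)$; substituting into $s_0(\nu,m_1,m_2+1,t)={m_2-t+1\b1}N'(t+1;m_1;2\nu)-q{m_2-t+1\b2}N'(t+2;m_1;2\nu)$ gives $s_0(\nu,m_1,m_2+1,t)>\bigl({m_2-t+1\b1}-q^{-2}\bigr)N'(t+1;m_1;2\nu)$, as needed. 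The boundary cases $m_1=t+1$ (then $N'(t+2;m_1;2\nu)=0$) and $m_2=t+1$ (then ${m_2-t+1\b2}=1$) are settled by direct inspection.

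The delicate point, and essentially the only idea, is the choice of the second-order correction in the middle step. A straightforward inclusion--exclusion over the ${m_2-t+1\b1}$ subspaces $I$ with $T\subset I\subset M$, $\dim I=t+1$, would subtract $\binom{{m_2-t+1\b1}}{2}N'(t+2;m_1;2\nu)$, and since $\binom{{m_2-t+1\b1}}{2}=\tfrac{q+1}{2}\,q{m_2-t+1\b2}$ this overshoots $s_0$ by a factor $\tfrac{q+1}{2}$ and is useless. The right move is to subtract only $q{m_2-t+1\b2}N'(t+2;m_1;2\nu)=q\,|\ml_{t+2}(M,T;m_1)|$, which is legitimate precisely because all the coefficients ${k-t\b1}-q{k-t\b2}$ in the last displayed identity remain $\le1$; once this is set up, the remaining estimates are routine exponent bookkeeping with Lemma \ref{gaosifangsuo}.
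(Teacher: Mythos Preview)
Your proof is correct and follows essentially the same approach as the paper's: both use the double-counting identity for $|\ml_j(M,T;m_1)|$, form the combination $|\ml_{t+1}|-q|\ml_{t+2}|=s_0$, and bound the resulting coefficients $\bigl[{k-t\atop 1}\bigr]-q\bigl[{k-t\atop 2}\bigr]\le 1$ to obtain $s_0\le|\mmc_1(M,T;m_1,t)|$, then finish the ``moreover'' part with the same exponent bookkeeping via Lemma~\ref{gaosifangsuo}. Your write-up is in fact slightly more careful than the paper's in justifying the strict inequality: you explicitly note $|\mmc_2(M,T;m_2)|>N'(t;m_2;2\nu)$ and $\mmc_1(M,T;m_1,t)\neq\emptyset$, which is what makes the product strictly larger; the paper glosses over this point.
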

	\begin{proof}
	Let $M\in\mp_{m_2+1}$ and $T\in{M\b t}$. 
	For each $j\in\{t+1,\dots,m_2+1\}$, by double counting $|\ml_j(M,T;m_1)|$, we have
	\begin{equation}\label{sp-suanliangci-formu}
		|\ml_j(M,T;m_1)|={m_2-t+1\b j-t}N'(j;m_1;2\nu)=\sum_{i=j}^{m_2+1}{i-t\b j-t}|\ma_i(M,T;m_1)|.
	\end{equation}
Then
\begin{equation*}
	\begin{aligned}
		s_0(\nu,m_1,m_2+1,t)=&\ |\ml_{t+1}(M,T;m_1)|-q|\ml_{t+2}(M,T;m_1)|\\
		=&\sum_{i=t+1}^{m_2+1}{i-t\b1}|\ma_i(M,T;m_1)|-q\sum_{i=t+2}^{m_2+1}{i-t\b2}|\ma_i(M,T;m_1)|\\
		=&\sum_{i=t+1}^{t+2}|\ma_i(M,T;m_1)|+\sum_{i=3}^{m_2+1}\left({i-t\b1}-q{i-t\b2}\right)|\ma_i(M,T;m_1)|.
	\end{aligned}
\end{equation*}
	Note that ${i-t\b1}<q{i-t\b2}$ for $i\ge t+3$. Then
\begin{equation*}
	\begin{aligned}
		s_0(\nu,m_1,m_2+1,t)&\le|\ma_{t+1}(M,T;m_1)|+|\ma_{t+2}(M,T;m_1)|\\
		&=|\{F\in\mmp:  T\subset F, \dim(F\cap M)\in\{t+1,t+2\}\}|\\
		&\le|\{F\in\mmp:  T\subset F, \dim(F\cap M)\ge t+1\}|\\
		&\le|\mmc_1(M,T;m_1,t)|,
	\end{aligned}
\end{equation*}
from which we get  $c_1(\nu,m_1,m_2,t)>s_0(\nu,m_1,m_2+1,t)N'(t;m_2;2\nu)$. 

	Together with Lemma \ref{gaosifangsuo} and $2\nu\ge m_1+2m_2-t+4$, we have
\begin{align*}
	\dfrac{c_1(\nu,m_1,m_2,t)}{N'(t;m_2;2\nu)}&>s_0(\nu,m_1,m_2+1,t)\\
	&=\left({m_2-t+1\b1}-q{m_2-t+1\b2}\dfrac{q^{m_1-t-1}-1}{q^{2(\nu-t-1)}-1}\right)N'(t+1;m_1;2\nu)\\
	&\ge\left({m_2-t+1\b1}-q^{m_1+2m_2-t+2-2\nu}\right)N'(t+1;m_1;2\nu)\\
	&\ge\left({m_2-t+1\b1}-q^{-2}\right)N'(t+1;m_1;2\nu),
\end{align*}
as desired.
\end{proof}

\begin{lem}\label{cp1-cp2}
	Let $\nu$, $m_1$, $m_2$ and $t$ be positive integers with $m_1\ge m_2\ge t+1$ and $2\nu\ge2m_1+m_2+t+2$.
	\begin{itemize}
		\item[\rmo] If $m_2>2t$ or $(m_1,m_2,t)=(2,2,1),(3,2,1)$, then $c_1(\nu,m_1,m_2,t)>c_2(\nu,m_1,m_2,t)$.
		\item[\rmt] If $m_2\le 2t$ and $(m_1,m_2,t)\neq(2,2,1),(3,2,1)$, then $c_1(\nu,m_1,m_2,t)<c_2(\nu,m_1,m_2,t)$.
	\end{itemize}
\end{lem}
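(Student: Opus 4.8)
\emph{Proof idea.} Both inequalities compare two completely explicit numbers, so the plan is to write $c_1(\nu,m_1,m_2,t)$ and $c_2(\nu,m_1,m_2,t)$ out as concrete functions of $q$ and then decide which is larger. The needed sizes are at hand. One has $|\mmc_3(S;m_1)|=N'(t+1;m_1;2\nu)$ and, from \eqref{cp3cp4-size},
$$c_2(\nu,m_1,m_2,t)=N'(t+1;m_1;2\nu)\Bigl({t+1\b1}N'(t;m_2;2\nu)-q{t\b1}N'(t+1;m_2;2\nu)\Bigr);$$
a direct count gives $|\mmc_2(M,T;m_2)|=N'(t;m_2;2\nu)+{m_2+1\b1}-{m_2-t+1\b1}$; and, starting from $|\mmc_1(M,T;m_1,t)|=N'(t;m_1;2\nu)-|\ma_t(M,T;m_1)|$ and inverting the triangular system \eqref{sp-suanliangci-formu} by $q$-binomial inversion, one gets
$$|\mmc_1(M,T;m_1,t)|=\sum_{p=1}^{m_2-t+1}(-1)^{p-1}q^{\binom p2}{m_2-t+1\b p}N'(t+p;m_1;2\nu).$$
By Lemma \ref{gaosifangsuo} and the hypothesis $2\nu\ge2m_1+m_2+t+2$ the terms of this alternating sum strictly decrease, so $|\mmc_1(M,T;m_1,t)|$ lies between $s_0(\nu,m_1,m_2+1,t)$ and $s_0(\nu,m_1,m_2+1,t)+q^{3}{m_2-t+1\b3}N'(t+3;m_1;2\nu)$ — the lower end being exactly the bound used in Lemma \ref{c1>c0}.

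After dividing $c_1$ and $c_2$ by the common factor $N'(t+1;m_1;2\nu)N'(t;m_2;2\nu)$, the leading coefficients are ${m_2-t+1\b1}$ and ${t+1\b1}$ respectively; their difference $(q^{m_2-t+1}-q^{t+1})/(q-1)$ is positive, zero, or negative exactly according to $m_2>2t$, $m_2=2t$, $m_2<2t$, which suggests splitting into three cases. When $m_2>2t$ the argument is short: $m_1\ge m_2$ and $t\ge1$ make $2\nu\ge m_1+2m_2-t+4$, so Lemma \ref{c1>c0} gives $c_1>({m_2-t+1\b1}-q^{-2})N'(t+1;m_1;2\nu)N'(t;m_2;2\nu)$ while trivially $c_2<{t+1\b1}N'(t+1;m_1;2\nu)N'(t;m_2;2\nu)$, and $m_2-t+1\ge t+2$ gives ${m_2-t+1\b1}-{t+1\b1}\ge q^{t+1}>q^{-2}$, hence $c_1>c_2$. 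When $m_2<2t$ I would bound $c_1$ above using $|\mmc_1|\le{m_2-t+1\b1}N'(t+1;m_1;2\nu)$ and the exact $|\mmc_2|$, and then show $c_2-c_1>0$: dividing by $N'(t+1;m_1;2\nu)$, its main term is $({t+1\b1}-{m_2-t+1\b1})N'(t;m_2;2\nu)\ge q^{t}N'(t;m_2;2\nu)$, which by Lemma \ref{gaosifangsuo} dwarfs the two error terms (one $O(q^{2m_2})$, the other a tiny multiple of $N'(t;m_2;2\nu)$) because $2\nu\ge2m_1+m_2+t+2$ makes $N'(t;m_2;2\nu)$ huge (already its first factor exceeds $q^{2\nu-2m_2+1}\ge q^{m_2+t+3}$).

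The real work is the case $m_2=2t$, where ${m_2-t+1\b1}={t+1\b1}$ so the leading terms cancel and the comparison is decided at second order. Setting $P={t+1\b1}N'(t+1;m_1;2\nu)$ (the leading term of $|\mmc_1|$) and $R={2t+1\b1}-{t+1\b1}$ (the remainder of $|\mmc_2|$), one rewrites
$$c_1-c_2=PR+q{t\b1}N'(t+1;m_1;2\nu)N'(t+1;m_2;2\nu)+\bigl(|\mmc_1|-P\bigr)|\mmc_2|,$$
the first two summands being positive and the last negative, so $c_1<c_2$ amounts to $(P-|\mmc_1|)\,|\mmc_2|>PR+q{t\b1}N'(t+1;m_1;2\nu)N'(t+1;m_2;2\nu)$. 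Plugging in $P-|\mmc_1|\ge q{t+1\b2}N'(t+2;m_1;2\nu)-q^{3}{t+1\b3}N'(t+3;m_1;2\nu)$ and $|\mmc_2|\ge N'(t;m_2;2\nu)$ and estimating all Gaussian binomials and $N'$-values by Lemma \ref{gaosifangsuo}, the left-hand side has order $q^{m_1+3t-2\nu}N'(t;m_2;2\nu)$, which (using $2\nu\ge2m_1+3t+2$, the hypothesis with $m_2=2t$) beats $PR$, of order $q^{3t}$, and $q{t\b1}N'(t+1;m_1;2\nu)N'(t+1;m_2;2\nu)$, of order $q^{4t-2\nu}N'(t;m_2;2\nu)$ (smaller by a factor $q^{m_1-t}$), for every admissible triple except $(m_1,m_2,t)=(2,2,1),(3,2,1)$; so $c_1<c_2$ there. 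These last two triples (the same ones in part (1)) satisfy $m_2-t+1=2$, so the sum for $|\mmc_1|$ has only two terms and the computation is exact: one finds $c_1-c_2=qN'(2;m_1;2\nu)\bigl((q^2+q+1)-\tfrac{q^{m_1-2}-1}{q^{2\nu-4}-1}(N'(1;2;2\nu)+q^2)\bigr)$, positive for $m_1\in\{2,3\}$ (the bracketed factor $q^{m_1-2}-1$ vanishing when $m_1=2$) and negative for $m_1\ge4$, in agreement with the statement. The main obstacle is this $m_2=2t$ case: the first-order cancellation means one must estimate the second-order terms carefully enough to detect both the generic inequality and the two genuine small exceptions, and the hypothesis on $2\nu$ is exactly what makes those estimates go through.
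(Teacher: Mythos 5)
Your proposal is correct in structure and, where it is explicit, the computations check out: the formula $|\mmc_2(M,T;m_2)|=N'(t;m_2;2\nu)+{m_2+1\b1}-{m_2-t+1\b1}$, the decomposition $c_1-c_2=PR+q{t\b1}N'(t+1;m_1;2\nu)N'(t+1;m_2;2\nu)+(|\mmc_1|-P)|\mmc_2|$, and the exact expression for $c_1-c_2$ when $(m_2,t)=(2,1)$ (which is the paper's \eqref{sp-m21} up to sign) are all right, and your three-way split by the sign of ${m_2-t+1\b1}-{t+1\b1}$ is exactly the paper's case division. For $m_2>2t$, for $m_2<2t$, and for the exceptional triples your argument coincides with the paper's (Lemma \ref{c1>c0} plus the trivial bound on $c_2$; the upper bound $|\mmc_1|\le{m_2-t+1\b1}N'(t+1;m_1;2\nu)$; the exact $(m_2,t)=(2,1)$ formula). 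The genuine divergence is the hard subcase $m_2=2t$, $t\ge2$: the paper lower-bounds the deficiency $\alpha=P-|\mmc_1|$ by $q|\ma_{t+2}(M,T;m_1)|$ and then invokes an external Anzahl formula \cite[Theorem 2.10]{QY} to estimate $|\ma_{t+2}(M,T;m_1)|$ from below, whereas you obtain $P-|\mmc_1|\ge q{t+1\b2}N'(t+2;m_1;2\nu)-q^3{t+1\b3}N'(t+3;m_1;2\nu)$ by inverting \eqref{sp-suanliangci-formu} and truncating the resulting alternating sum, whose terms decrease since $2\nu\ge2m_1+m_2+t+2>m_1+m_2+1$. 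Your route is self-contained (it uses only identities already in the paper and Lemma \ref{gaosifangsuo}) and gives a bound of the same order $q^{m_1+3t-2\nu}N'(t+1;m_1;2\nu)$, so the remaining comparison against $PR=O(q^{3t+2}N'(t+1;m_1;2\nu))$ and the $q{t\b1}N'(t+1;m_1;2\nu)N'(t+1;2t;2\nu)$ term goes through with comfortable margin for $t\ge2$; the price is that you must also control the third term $a_3$ of the alternating sum, which your hypothesis handles. The one caveat is that your write-up is a sketch: several inequalities (``dwarfs the two error terms'', ``beats $PR$'') are asserted at the level of leading exponents rather than proved, so a full version would still need the explicit chains of inequalities the paper supplies; but the asserted orders are correct and no genuine obstacle is hidden there.
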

\begin{proof}
	Write $c_3(\nu,m_1,m_2,t)=c_2(\nu,m_1,m_2,t)-c_1(\nu,m_1,m_2,t)$.
	
	(1) Suppose $m_2>2t$. By $2\nu\ge2m_1+m_2+t+2\ge m_1+2m_2-t+4$, \eqref{cp3cp4-size} and Lemma \ref{c1>c0}, we have
\begin{equation*}
	\begin{aligned}
		\dfrac{c_1(\nu,m_1,m_2,t)}{N'(t;m_2;2\nu)}>\ q{m_2-t\b1}N'(t+1;m_1;2\nu)>{t+1\b1}N'(t+1;m_1;2\nu)>\dfrac{c_2(\nu,m_1,m_2,t)}{N'(t;m_2;2\nu)}.
	\end{aligned}
\end{equation*}
which implies that $c_1(\nu,m_1,m_2,t)>c_2(\nu,m_1,m_2,t)$.	
	
	Assume  that $(m_1,m_2,t)=(2,2,1)$.
By \eqref{cp3cp4-size}, it is routine to check that
$$c_3(\nu,2,2,1)=\left({2\b1}N'(1;2;2\nu)-q\right)-{2\b1}\left(N'(1;2;2\nu)+q^2\right)<0.$$
Then the desired result follows.	

When $m_1\ge3$, it is routine to check that
\begin{equation*}
	\begin{aligned}
		c_1(\nu,m_1,2,1)&=\left({2\b1}N'(2;m_1;2\nu)-qN'(3;m_1;2\nu)\right)(N'(1;2;2\nu)+q^2),\\
		c_2(\nu,m_1,2,1)&=N'(2;m_1;2\nu)\left({2\b1}N'(1;2;2\nu)-q\right).
	\end{aligned}
\end{equation*}
Then
\begin{equation}\label{sp-m21}
	\dfrac{c_3(\nu,m_1,2,1)}{qN'(2;m_1;2\nu)}=\left(\dfrac{q^{2\nu-2}-1}{q-1}+q^2\right)\dfrac{q^{m_1-2}-1}{q^{2\nu-4}-1}-q^2-q-1.
\end{equation}
Suppose $(m_1,m_2,t)=(3,2,1)$. By $2\nu\ge2m_1+m_2=8$,  \eqref{sp-m21} and Lemma \ref{gaosifangsuo}, we have
\begin{equation*}
	\begin{aligned}
		\dfrac{c_3(\nu,3,2,1)}{qN'(2;3;2\nu)}&<\dfrac{q^{2\nu-2}-1}{q^{2\nu-4}-1}-q^2-q=\dfrac{q^2-1}{q^{2\nu-4}-1}-q<q^{-2}-q<0.
	\end{aligned}
\end{equation*}
Then the desired result follows.

	(2) Suppose $m_2=2t$ and $t=1$. By assumption, we have $m_1\ge4$. From Lemma \ref{gaosifangsuo}, $2\nu\ge2m_1+m_2+t+2$ and \eqref{sp-m21}, we obtain
\begin{equation*}
	\begin{aligned}
		\dfrac{c_3(\nu,m_1,2,1)}{qN'(2;m_1;2\nu)}&>q^2\cdot q^{m_1-3}+q^2\cdot q^{m_1+1-2\nu}-q^2-q-1\\
		&=q^{m_1-1}+q^{m_1+3-2\nu}-q^2-q-1\\
		&>q^3-q^2-q-1\\
		&>0.
	\end{aligned}
\end{equation*}
Then $c_1(\nu,m_1,2,1)<c_2(\nu,m_1,2,1)$.

Suppose $m_2=2t$ and $t\ge2$. We have $m_2\ge t+2$. Let $M\in\mp_{m_2+1}$ and $T\in{M\b t}$.
From \eqref{sp-suanliangci-formu}, we obtain
\begin{equation*}
	\begin{aligned}
		{t+1\b1}N'(t+1;m_1;2\nu)&=\sum_{j=t+1}^{2t+1}{j-t\b1}|\ma_j(M,T;m_1)|\\
		&=|\mmc_1(M,T;m_1,t)|+\sum_{j=t+2}^{2t+1}q{j-t-1\b1}|\ma_j(M,T;m_1)|.
	\end{aligned}
\end{equation*}
Set
$$\alpha=\sum_{j=t+2}^{2t+1}q{j-t-1\b1}|\ma_j(M,T;m_1)|.$$
We have $\alpha\ge q|\ma_{t+2}(M,T;m_1)|$ and
$$c_1(\nu,m_1,2t,t)=\left({t+1\b1}N'(t+1;m_1;2\nu)-\alpha\right)\left(N'(t;2t;2\nu)+q^{t+1}{t\b1}\right).$$	
Together with \eqref{cp3cp4-size}, we get
\begin{equation}\label{sp-ca1}
	\dfrac{c_3(\nu,m_1,2t,t)}{N'(t+1;m_1;2\nu)}>\dfrac{\alpha N'(t;2t;2\nu)}{N'(t+1;m_1;2\nu)}-q{t\b1}N'(t+1;2t;2\nu)-q^{t+1}{t\b1}{t+1\b1}.
\end{equation}	
By Lemma \ref{gaosifangsuo}, we have
\begin{equation}\label{sp-ca2}
	N'(t+1;m_1;2\nu)\le\prod_{i=1}^{m_1-t-1}q^{2(\nu-m_1)+i+1}=q^{2(m_1-t-1)(\nu-m_1)+\frac{(m_1-t-1)(m_1-t+2)}{2}}.
\end{equation}
Assume that $m_1=2t$. By Lemma \ref{gaosifangsuo} and \cite[Theorem 2.10]{QY}, we have
\begin{align*}
	\alpha\ge&\ q|\ma_{t+2}(M,T;m_1)|\\
	\ge&\ q{t+1\b2}\cdot\left(q^{\frac{(t-2)(t-1)}{2}+(t-2)(\nu-2t-1)+(t-2)(\nu-2t)}{t-1\b1}\right)\\
	\ge&\ q^{2(t-2)(\nu-2t)+\frac{t^2-t+2}{2}}{t\b1}.
\end{align*}
Together with \eqref{sp-ca1}, \eqref{sp-ca2}  and $2\nu\ge2m_1+m_2=6t$, we obtain
\begin{align*}
	\dfrac{c_3(\nu,2t,2t,t)}{{t\b1}N'(t+1;2t;2\nu)}&>\dfrac{q^{2(t-2)(\nu-2t)+\frac{t^2-t+2}{2}}(q^{2(\nu-t)}-1)}{q^t-1}-q^{2(t-1)(\nu-2t)+\frac{t^2+t}{2}}-q^{2t+2}\\
	&\ge\ q^{2(t-1)(\nu-2t)+\frac{t^2+t+2}{2}}-q^{2(t-1)(\nu-2t)+\frac{t^2+t}{2}+1}\\
	&=\ 0.
\end{align*}
Now assume that $m_1\ge 2t+1$. By Lemma \ref{gaosifangsuo} and \cite[Theorem 2.10]{QY}, we have
\begin{align*}
	\alpha\ge&\ q|\ma_{t+2}(M,T;m_1)|\\
	\ge&\ q{t+1\b2}\cdot q^{\frac{(t-1)t}{2}+\frac{(m_1-2t-1)(m_1-2)}{2}+2(m_1-t-2)(\nu-m_1)}\\
	\ge&\ q^{2(m_1-t-2)(\nu-m_1)+\frac{t^2+t}{2}+\frac{(m_1-2t-1)(m_1-2)}{2}}{t\b1}.
\end{align*}
Together with \eqref{sp-ca1}, \eqref{sp-ca2}, $2\nu\ge2m_1+m_2+t\ge7t+2$ and
$$N'(t;2t;2\nu)\ge\prod_{i=1}^tq^{2(\nu-2t)+i}=q^{2t(\nu-2t)+\frac{t(t+1)}{2}}\ge q^{4\nu-8t+3},$$
we get
\begin{align*}
	\dfrac{c_3(\nu,m_1,2t,t)}{{t\b1}N'(t+1;m_1;2\nu)}&>\left(\dfrac{\alpha}{{t\b1}N'(t+1;m_1;2\nu)}-\dfrac{q(q^t-1)}{q^{2(\nu-t)}-1}-\dfrac{q^{2t+2}}{q^{4\nu-8t+3}}\right)N'(t;2t;2\nu)\\
	&\ge\left(1-q^{-1}-q^{7t-3-2\nu}\right)q^{3t+2-2\nu}N'(t;2t;2\nu)\\
	&>\ 0.
\end{align*}
Then $c_1(\nu,m_1,2t,t)<c_2(\nu,m_1,2t,t)$.

Suppose $m_2<2t$. We have $t\ge2$ and
\begin{equation*}
	\begin{aligned}
		c_1(\nu,m_1,m_2,t)&\le{m_2-t+1\b1}N'(t+1;m_1;2\nu)\left(N'(t;m_2;2\nu)+q^{m_2-t+1}{t\b1}\right)\\
		&\le{t\b1}N'(t+1;m_1;2\nu)\left(N'(t;m_2;2\nu)+q^{t}{t\b1}\right).
	\end{aligned}
\end{equation*}
By Lemma \ref{gaosifangsuo}, we have
\begin{equation}\label{sp-jishu-xiajie}
	N'(t;m_2;2\nu)\ge\dfrac{q^{2(\nu-m_2+1)}-1}{q-1}\ge q^{2\nu-2m_2+1}.
\end{equation}
Together with $2\nu\ge2m_1+m_2+t+1\ge2m_2+2t$, $t\ge2$, \eqref{cp3cp4-size} and Lemma\ref{gaosifangsuo}, we get
\begin{align*}
	&\dfrac{c_3(\nu,m_1,m_2,t)}{N'(t+1;m_1;2\nu)N'(t;m_2;2\nu)}\\
	\ge&\left({t+1\b1}-q{t\b1}\cdot\dfrac{q^{m_2-t}-1}{q^{2(\nu-t)}-1}\right)-{t\b1}\left(1+\dfrac{q^{t}{t\b1}}{N'(t;m_2;2\nu)}\right)\\
	>&\ q^t\left(1-q^{m_2+t+1-2\nu}-q^{2m_2+2t-1-2\nu}\right)\\
	\ge&\ q^t\left(1-q^{-2m_1}-q^{-1}\right)\\
	>&\ 0,
\end{align*}
Then $c_1(\nu,m_1,m_2,t)<c_2(\nu,m_1,m_2,t)$.
\end{proof}

\begin{lem}\label{cp-cp'}
	Let $\nu$, $m_1$, $m_2$ and $t$ be positive integers with $m_1>m_2\ge t+1$ and $2\nu\ge2m_1+m_2+2$. The following hold.
	\begin{itemize}
		\item[\rmo] $c_1(\nu,m_1,m_2,t)>c_1(\nu,m_2,m_1,t)$.
		\item[\rmt] $c_2(\nu,m_1,m_2,t)>c_2(\nu,m_2,m_1,t)$.
	\end{itemize}
\end{lem}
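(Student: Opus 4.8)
The plan is to exploit the explicit product formulas for $c_1$ and $c_2$ and reduce each inequality to a single cross-ratio estimate governed by the hypothesis $2\nu\ge 2m_1+m_2+2$ together with $m_1>m_2$. For part (2), this is already almost immediate: from \eqref{cp3cp4-size} we have
$$c_2(\nu,m_1,m_2,t)=N'(t+1;m_1;2\nu)\left({t+1\b1}N'(t;m_2;2\nu)-q{t\b1}N'(t+1;m_2;2\nu)\right),$$
so the asserted inequality $c_2(\nu,m_1,m_2,t)>c_2(\nu,m_2,m_1,t)$ is equivalent to
$$\frac{N'(t+1;m_1;2\nu)}{N'(t+1;m_2;2\nu)}>\frac{{t+1\b1}N'(t;m_1;2\nu)-q{t\b1}N'(t+1;m_1;2\nu)}{{t+1\b1}N'(t;m_2;2\nu)-q{t\b1}N'(t+1;m_2;2\nu)}.$$
I would rewrite both sides using $N'(a;m;2\nu)=\prod_{i=1}^{m-a}\frac{q^{2(\nu-m+i)}-1}{q^i-1}$, cancel the common telescoping factors coming from the range $t+2\le m_2$, and observe that the remaining factor on the left is a product of terms $\frac{q^{2(\nu-m_1)+2i}-1}{q^i-1}$ with $i$ running over $m_2-t\le i\le m_1-t-1$, each of which is large (order $q^{2\nu-2m_1+2i}\ge q^{2m_2+2-2i_{\max}+2i}$, hence $\ge q^{m_2-t+2}$ when $i\ge m_2-t$ after plugging $i_{\max}=m_1-t-1$); the right-hand side, being a ratio of the same $N'$-expressions weighted by the fixed Gaussian coefficients ${t+1\b1}$ and $q{t\b1}$, is controlled above by $\frac{N'(t;m_1;2\nu)}{N'(t;m_2;2\nu)}$, which is strictly smaller than the left-hand ratio by the same telescoping-plus-Lemma~\ref{gaosifangsuo} argument. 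Making the weighted-ratio comparison precise (the subtraction in numerator and denominator) is the one bookkeeping point that needs care, but it follows from the elementary fact that for positive $a_1/b_1>a_2/b_2$ one has $(Aa_1-Bc_1)/(Aa_2-Bc_2)$ lying between $a_1/a_2$ and $c_1/c_2$ provided the relevant quantities are positive, which here they are because $m_2>2t$ is \emph{not} assumed — instead one checks directly from \eqref{cp3cp4-size} and Lemma~\ref{gaosifangsuo} that $c_2>0$.

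For part (1), I would use the double-counting identity \eqref{sp-suanliangci-formu} to get a clean closed form for $|\mmc_1(M,T;m;t)|$. Specifically, summing $\sum_{j\ge t+1}(-1)^{j-t-1}q^{\binom{j-t-1}{2}}{\,\cdot\,}$ over the $\ml_j$ relations, or more simply using the inclusion $|\mmc_1(M,T;m,t)|=\sum_{i=t+1}^{m_2+1}|\ma_i(M,T;m)|=|\{F\in\mp_m:T\subset F,\ \dim(F\cap M)\ge t+1\}|$ and then applying \eqref{sp-suanliangci-formu} with $j=t+1$, yields
$$|\mmc_1(M,T;m,t)|={m_2-t+1\b1}N'(t+1;m;2\nu)-\sum_{j\ge t+2}\!\left(q{j-t-1\b1}\right)|\ma_j(M,T;m)|.$$
Since $c_1(\nu,m_1,m_2,t)=|\mmc_1(M,T;m_1,t)|\cdot|\mmc_2(M,T;m_2)|$ while $c_1(\nu,m_2,m_1,t)=|\mmc_1(M',T';m_2,t)|\cdot|\mmc_2(M',T';m_1)|$ for $M'\in\mp_{m_1+1}$, the two products are built from different ``ambient'' dimensions, so I cannot simply cancel factors. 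Instead I would bound: $|\mmc_1(M,T;m_1,t)|\ge q\,{m_2-t+1\b1}N'(t+1;m_1;2\nu)\cdot q^{-2}$ say — more usefully, Lemma~\ref{c1>c0} already gives $c_1(\nu,m_1,m_2,t)>\big({m_2-t+1\b1}-q^{-2}\big)N'(t+1;m_1;2\nu)N'(t;m_2;2\nu)$ when $2\nu\ge m_1+2m_2-t+4$ (which follows from $2\nu\ge2m_1+m_2+2$ together with $m_1>m_2$, i.e. $m_1\ge m_2+1$, giving $2m_1+m_2+2\ge m_1+2m_2-t+3+(m_1-m_2)\ge m_1+2m_2-t+4$), and for the other side I would derive a matching \emph{upper} bound
$$c_1(\nu,m_2,m_1,t)\le{m_1-t+1\b1}N'(t+1;m_2;2\nu)\cdot\left(N'(t;m_1;2\nu)+q^{m_1-t+1}{t\b1}\right)$$
exactly as in the opening lines of the proof of Lemma~\ref{cp1-cp2}(2). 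Then the inequality $c_1(\nu,m_1,m_2,t)>c_1(\nu,m_2,m_1,t)$ reduces to comparing
$$\left({m_2-t+1\b1}-q^{-2}\right)N'(t+1;m_1;2\nu)N'(t;m_2;2\nu)$$
against ${m_1-t+1\b1}N'(t+1;m_2;2\nu)\big(N'(t;m_1;2\nu)+q^{m_1-t+1}{t\b1}\big)$, and here the decisive gain is the ratio $\frac{N'(t+1;m_1;2\nu)N'(t;m_2;2\nu)}{N'(t+1;m_2;2\nu)N'(t;m_1;2\nu)}$, which telescopes to $\prod_{i=m_2-t}^{m_1-t-1}\frac{q^{2(\nu-m_1)+2i+2}-1}{q^{2(\nu-m_1)+2i+1}\cdot(\text{lower order})}$ — concretely it is bounded below by $q^{2(2\nu-2m_1-m_2+t)+\cdots}$, a power of $q$ that the hypothesis $2\nu\ge2m_1+m_2+2$ makes comfortably larger than the ratio ${m_1-t+1\b1}/{m_2-t+1\b1}\le q^{m_1-m_2}$ of the Gaussian coefficients plus the lower-order additive term $q^{m_1-t+1}{t\b1}$.

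The main obstacle is part (1): unlike part (2), the two sides do not share a common ambient structure and the Gaussian-coefficient prefactors go the ``wrong way'' (${m_1-t+1\b1}>{m_2-t+1\b1}$), so the argument relies on the $N'$-ratio $\frac{N'(t+1;m_1)N'(t;m_2)}{N'(t+1;m_2)N'(t;m_1)}$ beating this prefactor ratio by a wide enough margin; verifying that the exponent arithmetic works out under exactly $2\nu\ge2m_1+m_2+2$ (rather than needing a stronger hypothesis) is where one must be careful, and I expect to need the full strength of $m_1>m_2$ (strict) together with Lemma~\ref{gaosifangsuo}'s two-sided bounds to close the gap, possibly splitting off the small case $m_2=t+1$ by hand as is done repeatedly elsewhere in the paper.
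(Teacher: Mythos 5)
Your overall strategy for part (1) coincides with the paper's: lower bound $c_1(\nu,m_1,m_2,t)$ via the second inequality of Lemma~\ref{c1>c0} (your check that $2\nu\ge m_1+2m_2-t+4$ follows from $2\nu\ge 2m_1+m_2+2$ and $m_1\ge m_2+1$ is correct), and upper bound $c_1(\nu,m_2,m_1,t)\le {m_1-t+1\b1}N'(t+1;m_2;2\nu)\bigl(N'(t;m_1;2\nu)+q^{m_1-t+1}{t\b1}\bigr)$. Part (2) is also essentially right, though roundabout: the paper simply notes that the subtracted term $q{t\b1}N'(t+1;m_1;2\nu)N'(t+1;m_2;2\nu)$ in \eqref{cp3cp4-size} is symmetric in $m_1,m_2$, so the difference reduces at once to comparing $N'(t;m_2;2\nu)/N'(t+1;m_2;2\nu)$ with $N'(t;m_1;2\nu)/N'(t+1;m_1;2\nu)$; your mediant argument reaches the same place once positivity of $c_2$ is checked.

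The genuine gap is in the final comparison of part (1). You claim the cross-ratio $\frac{N'(t+1;m_1;2\nu)N'(t;m_2;2\nu)}{N'(t+1;m_2;2\nu)N'(t;m_1;2\nu)}$ is bounded below by a power of $q$ of order $q^{2(2\nu-2m_1-m_2+t)}$ and hence beats the prefactor ratio ${m_1-t+1\b1}\big/{m_2-t+1\b1}$ by a wide margin. This is false: since $N'(t;m;2\nu)/N'(t+1;m;2\nu)=\frac{q^{2(\nu-t)}-1}{q^{m-t}-1}$, all $\nu$-dependence cancels and the cross-ratio equals exactly $\frac{q^{m_1-t}-1}{q^{m_2-t}-1}<q^{m_1-m_2}$, which is of the \emph{same} order as the prefactor ratio. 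Consequently the two-sided bounds of Lemma~\ref{gaosifangsuo}, which lose factors of $q$, cannot close the argument. The inequality holds only because of an exact cancellation of leading terms: one has ${m_1-t+1\b1}\cdot\frac{q^{m_2-t}-1}{q^{m_1-t}-1}=q{m_2-t\b1}+\frac{q^{m_2-t}-1}{q^{m_1-t}-1}$ on the upper-bound side, while ${m_2-t+1\b1}-q^{-2}=q{m_2-t\b1}+1-q^{-2}$ on the lower-bound side; after the $q{m_2-t\b1}$ terms cancel, what remains is $1-q^{-2}-(\text{small error})-\frac{q^{m_2-t}-1}{q^{m_1-t}-1}$, and this is positive only because $m_1>m_2$ forces $\frac{q^{m_2-t}-1}{q^{m_1-t}-1}<q^{-1}$. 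The surviving margin is of order one, not a power of $q$, so without identifying this cancellation explicitly your sketch does not yield a proof.
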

\begin{proof}
	(1) By Lemmas \ref{gaosifangsuo}, \ref{c1>c0} and $2\nu\ge2m_1+m_2+2\ge m_1+2m_2-t+4$, we have
	\begin{align*}
		\dfrac{c_1(\nu,m_1,m_2,t)}{N'(t+1;m_1;2\nu)N'(t;m_2;2\nu)}\ge{m_2-t+1\b1}-q^{-2}=q{m_2-t\b1}+1-q^{-2}.
	\end{align*}
Since $m_1>m_2\ge t+1$, by \eqref{sp-jishu-xiajie}, we obtion $N'(t+1;m_1;2\nu)\ge q^{2\nu-2m_1+1}$.
Together with $2\nu\ge2m_1+m_2+2$, we get
\begin{align*}
	&\dfrac{c_1(\nu,m_2,m_1,t)}{N'(t+1;m_1;2\nu)N'(t;m_2;2\nu)}\\
	\le&\left(N'(t;m_1;2\nu)+q^{m_1-t+1}{t\b1}\right){m_1-t+1\b1}\dfrac{N'(t+1;m_2;2\nu)}{N'(t+1;m_1;2\nu)N'(t;m_2;2\nu)}\\
	\le&{m_1-t+1\b1}\left(\dfrac{N'(t;m_1;2\nu)N'(t+1;m_2;2\nu)}{N'(t+1;m_1;2\nu)N'(t;m_2;2\nu)}+\dfrac{q^{m_1+1}N'(t+1;m_2;2\nu)}{N'(t+1;m_1;2\nu)N'(t;m_2;2\nu)}\right)\\
	\le&{m_1-t+1\b1}\dfrac{N'(t;m_1;2\nu)N'(t+1;m_2;2\nu)}{N'(t+1;m_1;2\nu)N'(t;m_2;2\nu)}+q^{4m_1+m_2+1-4\nu}\\
	\le&{m_1-t+1\b1}\dfrac{N'(t;m_1;2\nu)N'(t+1;m_2;2\nu)}{N'(t+1;m_1;2\nu)N'(t;m_2;2\nu)}+q^{-m_2-3}.
\end{align*}
Then by $2\nu\ge2m_1+m_2+2$ and $m_1>m_2\ge t+1$, we have
\begin{align*}
	&\ \dfrac{c_1(\nu,m_1,m_2,t)-c_1(\nu,m_2,m_1,t)}{N'(t+1;m_1;2\nu)N'(t;m_2;2\nu)}\\
	>&\ q{m_2-t\b1}+(1-q^{-2}-q^{-m_2-3})-{m_1-t+1\b1}\dfrac{N'(t;m_1;2\nu)N'(t+1;m_2;2\nu)}{N'(t+1;m_1;2\nu)N'(t;m_2;2\nu)}\\
	=&\ 1-q^{-2}-q^{-m_2-3}-\dfrac{q^{m_2-t}-1}{q^{m_1-t}-1}\\
	>&\ 0.
\end{align*}
Then the desired result follows.

(2) From \eqref{cp3cp4-size} and $m_1>m_2$, we obtain
\begin{equation*}
	\begin{aligned}
		\dfrac{c_2(\nu,m_1,m_2,t)-c_2(\nu,m_2,m_1,t)}{{t+1\b1}N'(t+1;m_1;2\nu)N'(t+1;m_2;2\nu)}&=\dfrac{N'(t;m_2;2\nu)}{N'(t+1;m_2;2\nu)}-\dfrac{N'(t;m_1;2\nu)}{N'(t+1;m_1;2\nu)}\\
		&=(q^{2(\nu-t)}-1)\left(\dfrac{1}{q^{m_2-t}-1}-\dfrac{1}{q^{m_1-t}-1}\right)\\
		&>0,
	\end{aligned}
\end{equation*}
as desired.
\end{proof}

To present the proof of Theorem \ref{non-trivial} briefly, we prove the following two lemmas. Write
$$c_0(\nu,m_1,m_2,t)=q{m_2-t\b1}N'(t+1;m_1;2\nu)N'(t;m_2;2\nu).$$
\begin{lem}\label{sp-cross-upper-other}
	Let $\nu$, $m_1$, $m_2$ and $t$ be positive integers with $m_1,m_2\ge t+1$ and $2\nu\ge m_1+m_2+1+\max\{m_1+t+2,m_2\}$. Suppose $\mf_1\subset\mp_{m_1}$ and $\mf_2\subset\mp_{m_2}$ are non-trivial cross $t$-intersecting families with $\tau_t(\mf_2)\ge\tau_t(\mf_1)$ and $(\tau_t(\mf_1),\tau_t(\mf_2))\neq(t,t+1)$. Then $|\mf_1||\mf_2|<c_1(\nu,m_1,m_2,t)$.
\end{lem}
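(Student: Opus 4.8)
\smallskip

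The plan is to bound $|\mf_1||\mf_2|$ from above using Lemma~\ref{sp-yibanshangjie} together with the monotonicity of Lemma~\ref{sp-dijian}, and then compare the result with the lower bound for $c_1(\nu,m_1,m_2,t)$ furnished by Lemma~\ref{c1>c0}. We may assume $\mf_1,\mf_2\neq\emptyset$, so that $\tau_t(\mf_1)$ and $\tau_t(\mf_2)$ are defined and, since every member of one family is a $t$-cover of the other, $\tau_t(\mf_1)\le m_2$ and $\tau_t(\mf_2)\le m_1$. As $\max\{m_1+t+2,m_2\}$ is at least $m_1+t+2$ and at least $m_2$, the hypothesis yields $2\nu\ge 2m_1+m_2+t+3$ and $2\nu\ge m_1+2m_2+1$; together with $m_1,m_2\ge t+1$ these are exactly the conditions needed to invoke Lemmas~\ref{sp-yibanshangjie}, \ref{sp-dijian}, \ref{sp-cover} and \ref{c1>c0} in either orientation (with $m_1$ and $m_2$ interchanged).

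First I restrict $(\tau_t(\mf_1),\tau_t(\mf_2))$. By Lemma~\ref{sp-cover} the minimum-dimensional $t$-covers of $\mf_1$ and of $\mf_2$ form cross $t$-intersecting families; if $\tau_t(\mf_1)=\tau_t(\mf_2)=t$, then any minimal $t$-cover of $\mf_1$ equals any minimal $t$-cover of $\mf_2$ (both are $t$-spaces intersecting in dimension $\ge t$), so a single $t$-space $T$ lies in every member of $\mf_1$ and of $\mf_2$, contradicting non-triviality. Hence $(\tau_t(\mf_1),\tau_t(\mf_2))\neq(t,t)$, and with $\tau_t(\mf_2)\ge\tau_t(\mf_1)\ge t$ and $(\tau_t(\mf_1),\tau_t(\mf_2))\neq(t,t+1)$ we are left with two cases: (i) $\tau_t(\mf_1)=t$ and $\tau_t(\mf_2)\ge t+2$, which forces $m_1\ge t+2$; and (ii) $\tau_t(\mf_1)\ge t+1$ (hence also $\tau_t(\mf_2)\ge t+1$).

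Writing $g_{b,c}(x)={x\b t}{c-t+1\b1}^{x-t}N'(x;b;2\nu)$ as in Lemma~\ref{sp-dijian}, I apply Lemma~\ref{sp-yibanshangjie} to the ordered pair $(\mf_1,\mf_2)$ and to $(\mf_2,\mf_1)$; multiplying the two bounds and regrouping the binomial factors gives
$$|\mf_1||\mf_2|\le g_{m_1,m_2}(\tau_t(\mf_2))\cdot g_{m_2,m_1}(\tau_t(\mf_1)).$$
By Lemma~\ref{sp-dijian}, $g_{m_1,m_2}$ is decreasing on $\{t,\dots,m_1\}$ and $g_{m_2,m_1}$ on $\{t,\dots,m_2\}$. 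Thus in case (i), using $g_{m_2,m_1}(t)=N'(t;m_2;2\nu)$, we get $|\mf_1||\mf_2|\le g_{m_1,m_2}(t+2)\,N'(t;m_2;2\nu)$, and in case (ii) we get $|\mf_1||\mf_2|\le g_{m_1,m_2}(t+1)\,g_{m_2,m_1}(t+1)$. On the other side, the unconditional part of Lemma~\ref{c1>c0} gives $c_1(\nu,m_1,m_2,t)>s_0(\nu,m_1,m_2+1,t)\,N'(t;m_2;2\nu)$. So it remains to prove the two elementary inequalities $g_{m_1,m_2}(t+2)<s_0(\nu,m_1,m_2+1,t)$ (case (i), where $m_1\ge t+2$) and $g_{m_1,m_2}(t+1)\,g_{m_2,m_1}(t+1)<s_0(\nu,m_1,m_2+1,t)\,N'(t;m_2;2\nu)$ (case (ii)). Dividing each through by ${m_2-t+1\b1}\,N'(t+1;m_1;2\nu)$ and using $N'(a+1;m;2\nu)/N'(a;m;2\nu)=(q^{m-a}-1)/(q^{2(\nu-a)}-1)$, each reduces to a bounded inequality among small Gaussian binomial coefficients and such ratios, to be dispatched by Lemma~\ref{gaosifangsuo} and $2\nu\ge 2m_1+m_2+t+3$.

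The main obstacle is the last step in case (i) with $\tau_t(\mf_2)=t+2$: the resulting inequality has little slack, so the coarsest estimates of Lemma~\ref{gaosifangsuo} are not enough for small $q$; one must keep the ratio $(q^{m_1-t-1}-1)/(q^{2(\nu-t-1)}-1)$ reasonably tight and genuinely use $m_1\ge t+2$. Everything else is routine bookkeeping.
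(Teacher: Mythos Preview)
Your proposal is correct and follows essentially the same route as the paper: both argue via Lemma~\ref{sp-yibanshangjie} twice, reduce to the smallest admissible values of $(\tau_t(\mf_1),\tau_t(\mf_2))$ using the monotonicity in Lemma~\ref{sp-dijian}, and finish by comparing with a lower bound for $c_1$ from Lemma~\ref{c1>c0}. The only cosmetic difference is that the paper compares with $c_0(\nu,m_1,m_2,t)=q{m_2-t\b1}N'(t+1;m_1;2\nu)N'(t;m_2;2\nu)$ (using the ``moreover'' part of Lemma~\ref{c1>c0}) rather than with $s_0(\nu,m_1,m_2+1,t)N'(t;m_2;2\nu)$; this makes the final arithmetic a touch cleaner, and in fact the coarse estimates of Lemma~\ref{gaosifangsuo} already suffice in your Case~(i) once $m_1\ge t+2$ is used, so the ``obstacle'' you flag is milder than you suggest.
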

\begin{proof}
	Suppose $\tau_t(\mf_1)=t$. By assumption, we have $m_1\ge\tau_t(\mf_2)\ge t+2$. By Lemmas \ref{gaosifangsuo}, \ref{sp-yibanshangjie}, \ref{sp-dijian} and $2\nu\ge 2m_1+m_2+t+3\ge m_1+m_2+2t+5$, we have
	\begin{equation*}
		\begin{aligned}
			\dfrac{|\mf_1||\mf_2|}{c_0(\nu,m_1,m_2,t)}&\le\dfrac{\left({\tau_t(\mf_2)\b t}{m_2-t+1\b1}^{\tau_t(\mf_2)-t}N'(\tau_t(\mf_2);m_1;2\nu)\right)N'(t;m_2;2\nu)}{q{m_2-t\b1}N'(t+1;m_1;2\nu)N'(t;m_2;2\nu)}\\
			&\le\dfrac{{t+2\b2}{m_2-t+1\b1}^2N'(t+2;m_1;2\nu)}{q{m_2-t\b1}N'(t+1;m_1;2\nu)}\\
			&=\dfrac{(q^{m_1-t-1}-1)(q^{m_2-t+1}-1)}{q(q^{2(\nu-t-1)}-1)(q^{m_2-t}-1)}{t+2\b2}{m_2-t+1\b1}\\
			&<q^{m_1+m_2+2t+5-2\nu}\\
			&\le1.
		\end{aligned}
	\end{equation*}
	Then the desired result follows from Lemma \ref{c1>c0}.

Suppose $\tau_t(\mf_1)\ge t+1$. We have $\tau_t(\mf_2)\ge t+1$ and $m_1\ge t+1$. By Lemma \ref{gaosifangsuo},  \ref{sp-yibanshangjie}, \ref{sp-dijian} and $2\nu\ge\max\{m_1+m_2+2t+4,2m_1+m_2+1,m_1+2m_2+1\}$, we obtain
\begin{align*}
	\dfrac{|\mf_1||\mf_2|}{c_0(\nu,m_1,m_2,t)}&\le\dfrac{{t+1\b1}^2{m_1-t+1\b1}{m_2-t+1\b1}N'(t+1;m_1;2\nu)N'(t+1;m_2;2\nu)}{q{m_2-t\b1}N'(t+1;m_1;2\nu)N'(t;m_2;2\nu)}\\
	&=\dfrac{(q^{m_2-t+1}-1)}{q(q^{2(\nu-t)}-1)}{t+1\b1}^2{m_1-t+1\b1}\\
	&<q^{m_1+m_2+2t+3-2\nu}\\
	&<1.
\end{align*}
Then the desired result follows from Lemma \ref{c1>c0}.
\end{proof}

	\begin{lem}\label{sp-cross-upper-tt+1}
	Let $\nu$, $m_1$, $m_2$ and $t$ be positive integers with $\nu>m_i\ge t+1$, $i\in\{1,2\}$, and $2\nu\ge m_1+2m_2+3$. Suppose $\mf_1\subset\mp_{m_1}$ and $\mf_2\subset\mp_{m_2}$ are maximal non-trivial  cross $t$-intersecting families with $(\tau_t(\mf_1),\tau_t(\mf_2))=(t,t+1)$. Then one of the following holds.
	\begin{itemize}
		\item[\rmo] $\mf_1=\mmc_1(M,T;m_1,t)$ and $\mf_2=\mmc_2(M,T;m_2)$ for some $M\in\mp_{m_2+1}$ and $T\in{M\b t}$.
		\item[\rmt] $\mf_1=\mmc_3(S;m_1) $and $\mf_2=\mmc_4(S;m_2,t)$ for some $S\in\mp_{t+1}$.
		\item[\rmth] $|\mf_1||\mf_2|<c_1(\nu,m_1,m_2,t)$.
	\end{itemize}
\end{lem}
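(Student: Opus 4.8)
I begin with the cover structure. Since $\tau_t(\mf_1)=t$ and $\tau_t(\mf_2)=t+1$, Lemma~\ref{sp-cover} provides a $t$-cover $T\in\mp_t$ of $\mf_1$ and a $t$-cover $S\in\mp_{t+1}$ of $\mf_2$ with $T\subseteq S$, so that $\mf_1\subseteq\{F\in\mp_{m_1}:T\subseteq F\}$ and $\mf_2\subseteq\mmc_4(S;m_2,t)$. Write $\mf_2'=\{F\in\mf_2:T\not\subseteq F\}$; non-triviality of the pair forces $\mf_2'\neq\emptyset$, and for $F_2\in\mf_2'$ the trace $F_2\cap S$ is a $t$-subspace of $S$ other than $T$, whence $\dim(T\cap F_2)=t-1$. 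The key elementary fact is that for all $F_1\in\mf_1$ and $F_2\in\mf_2'$,
$$\dim\big(F_1\cap(T+F_2)\big)\ \ge\ \dim\big(T+(F_1\cap F_2)\big)=t+\dim(F_1\cap F_2)-\dim(T\cap F_2)\ \ge\ t+1 ,$$
using $T\subseteq F_1$ and $\dim(F_1\cap F_2)\ge t$. Since $M(F_2):=T+F_2$ is an $(m_2+1)$-dimensional space containing $S$, this pins $\mf_1$ against every $M(F_2)$.

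Case~1: $S\subseteq F_1$ for every $F_1\in\mf_1$. Then $\mf_1\subseteq\mmc_3(S;m_1)$ and $\mf_2\subseteq\mmc_4(S;m_2,t)$; since $\mmc_3(S;m_1)$ and $\mmc_4(S;m_2,t)$ form a non-trivial cross $t$-intersecting pair, maximality of $(\mf_1,\mf_2)$ upgrades both inclusions to equalities, which is conclusion~\rmt. Case~2: there is $F_1^\ast\in\mf_1$ with $S\not\subseteq F_1^\ast$, and the spaces $M(F_2)$, $F_2\in\mf_2'$, all coincide with one totally isotropic $M$. Then the pinning inequality gives $\mf_1\subseteq\mmc_1(M,T;m_1,t)$, and $F_2\subseteq M$ for every $F_2\in\mf_2'$ gives $\mf_2\subseteq\mmc_2(M,T;m_2)$; since $\mmc_1(M,T;m_1,t)$ and $\mmc_2(M,T;m_2)$ form a non-trivial cross $t$-intersecting pair, maximality again forces equalities, which is conclusion~\rmo. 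In both cases the passage from inclusion to equality uses $2\nu\ge m_1+2m_2+3$: a totally isotropic subspace violating the defining condition of $\mmc_3$, $\mmc_4$, $\mmc_1$ or $\mmc_2$ can, because $\nu$ is large, be paired with a member of the opposite family meeting it in dimension $<t$, and so cannot be added.

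It remains to produce conclusion~\rmth\ in the other situations. If the $M(F_2)$ all equal one space $M$ that is \emph{not} totally isotropic, then, $\mf_2'$ being a nonempty set of $m_2$-dimensional totally isotropic subspaces of $M$, $M$ must be of type $(m_2+1,1)$; hence $M$ contains exactly $q+1$ members of $\mp_{m_2}$, so $|\mf_2|\le N'(t;m_2;2\nu)+q+1$, while $|\mf_1|$ is bounded by the number of $m_1$-dimensional totally isotropic spaces through $T$ meeting $M$ in dimension $\ge t+1$, which is strictly smaller than $|\mmc_1(M',T;m_1,t)|$ for totally isotropic $M'$ because not every $(t+1)$-space through $T$ inside $M$ is totally isotropic; comparing the resulting bound with the lower estimate for $c_1(\nu,m_1,m_2,t)$ in Lemma~\ref{c1>c0} yields \rmth. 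If instead the $M(F_2)$ are not all equal, then $\mf_1$ is simultaneously pinned against two distinct $(m_2+1)$-spaces $M_1,M_2\supseteq S$, so each $F_1\in\mf_1$ contains a $(t+1)$-space through $T$ lying in $M_1\cap M_2$ or a totally isotropic $(\ge t+2)$-space through $T$; estimating this via the double counting in \eqref{sp-suanliangci-formu} and Lemma~\ref{gaosifangsuo}, combined with $|\mf_2|\le{t+1\b1}N'(t;m_2;2\nu)$ from Lemma~\ref{sp-yibanshangjie} (sharpened, when $\dim(M_1\cap M_2)$ is large, by the fact that $\mf_2'$ is then itself pinned against an $(m_1+1)$-space and so $|\mf_2|$ is much smaller), gives $|\mf_1||\mf_2|<c_1(\nu,m_1,m_2,t)$.

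The main obstacle is this last paragraph. In the non-extremal branches $|\mf_1|$ is only a factor polynomial in $q$ away from $|\mmc_1(M,T;m_1,t)|$, so crude bounds are useless; one must work with the precise shape of $c_1(\nu,m_1,m_2,t)$ from Lemma~\ref{c1>c0} (and, in the tightest spots, with the exact count $|\mmc_1(M,T;m_1,t)|=N'(t;m_1;2\nu)-|\ma_t(M,T;m_1)|$ obtained from the double counting in \eqref{sp-suanliangci-formu}), and it is exactly here that the hypothesis $2\nu\ge m_1+2m_2+3$ is consumed, through the smallness of ratios such as $N'(t+2;m_1;2\nu)/N'(t+1;m_1;2\nu)$ estimated in Lemma~\ref{gaosifangsuo}; one must also be prepared to trade a weaker bound on $|\mf_1|$ for a stronger one on $|\mf_2|$ when the pinning spaces overlap heavily. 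A secondary subtlety, already indicated, is the upgrade from inclusion to equality in conclusions \rmo\ and \rmt, which again rests on the largeness of $\nu$.
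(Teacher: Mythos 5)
Your structural skeleton is essentially the paper's: the covers $T\subset S$, the subspaces $T+F_2$ for $F_2\in\mf_2$ with $T\not\subset F_2$, the pinning inequality $\dim(F_1\cap(T+F_2))\ge t+1$, and the dichotomy "all pinning spaces coincide and are totally isotropic (conclusion (\romannumeral1)) / $S$ lies in every member of $\mf_1$ (conclusion (\romannumeral2)) / otherwise (conclusion (\romannumeral3))". The derivations of (\romannumeral1) and (\romannumeral2) by maximality are fine (though the upgrade to equality does not actually need $2\nu\ge m_1+2m_2+3$, only that the extremal pairs are cross $t$-intersecting).

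The genuine gap is the quantitative case, which you defer, and the specific tool you name cannot close it. Note that $c_1(\nu,m_1,m_2,t)\ge s_0(\nu,m_1,m_2+1,t)N'(t;m_2;2\nu)\approx{m_2-t+1\b1}N'(t+1;m_1;2\nu)\,N'(t;m_2;2\nu)$, while the pinning inclusion \eqref{sp-f1-str} alone already gives $|\mf_1|\le{m_2-t+1\b1}N'(t+1;m_1;2\nu)$ up to lower-order terms; so the margin you must extract in case (\romannumeral3) is a factor barely larger than $1$, and it has to come almost entirely from $|\mf_2|$: you need $|\mf_2|\le(1+o(1))N'(t;m_2;2\nu)$. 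Your proposed bound $|\mf_2|\le{t+1\b1}N'(t;m_2;2\nu)$ from Lemma \ref{sp-yibanshangjie} overshoots by a factor of about $q^t$, and the product then exceeds $c_1$ already for $t=1$ (and fails badly for $t\ge2$); your hedge about "sharpening when the pinning spaces overlap heavily" does not address the regime where they barely overlap. The paper's resolution is structural, not a trade-off: letting $M$ be the span of \emph{all} minimum $t$-covers of $\mf_2$, every $F\in\mf_2$ with $T\not\subset F$ satisfies $T+F=M+F$, hence $\dim(F\cap M)=\dim M-1$; when $\dim M\ge t+2$ this yields $|\mf_2|\le N'(t;m_2;2\nu)+q^{2}{t\b1}N'(t+1;m_2;2\nu)=(1+O(q^{-2}))N'(t;m_2;2\nu)$, which is what makes the comparison with $c_1$ go through (together with a quadratic-function optimization over $w=\dim((T+F_{2,3})\cap H)$ for the $|\mf_1|$ bound, and a separate argument when $\dim M=t+1$ using a member $F_{1,1}$ of $\mf_1$ with $F_{1,1}\cap S=T$). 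Without this refined control on $|\mf_2|$ your case (\romannumeral3) does not follow, so the proof as proposed is incomplete at its hardest point.
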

	\begin{proof}
	Let $T$ be a $t$-cover of $\mf_1$ with dimension $t$ and $\ms$ denote the set of all $t$-covers of $\mf_2$ with dimension $t+1$. By Lemma \ref{sp-cover}, each element of $\ms$ contains $T$. Let $M$ be a subspace of $\ff^{2\nu}$ generated by $\bigcup_{S\in\ms}S$. For each $F\in\mf_2\bs(\mf_2)_T$ and $S\in\ms$, we have 
	$\dim(F\cap T)=t-1$, $\dim(F\cap S)=t$ and
	$$m_2+1=\dim(T+F)\le\dim(S+F)=m_2+1.$$
	Then $T+F=S+F$, which implies that $T+F=M+F$. Thus $\dim(F\cap M)=\dim M-1$ and $t+1\le\dim M\le m_2+1$.		
	Note that the type of $M$ is $(\dim M,0)$ or $(\dim M,1)$.

Since $\tau_t(\mf_2)=t+1$, there exists $F_{2,1}\in\mf_2$ such that $T\not\subset F_{2,1}$. Write
$$H:=T\vee F_{2,1}.$$
We have
$$S=T+(S\cap F_{2,1})\subset H,\quad\dim H=m_2+1.$$
For each $F\in\mf_1$, since $T\subset F$, $T\not\subset F_{2,1}$ and $\dim(F\cap F_{2,1})\ge t$, we have
$$
	\dim(F\cap H)\ge\dim T+\dim(F\cap F_{2,1})-\dim(T\cap F_{2,1})\ge t+1.
$$
Therefore
\begin{equation}\label{sp-f1-str}
	\mf_1\subset\{F\in\mp_{m_1}: T\subset F, \dim(F\cap H)\ge t+1\}.
\end{equation}

\medskip
\noindent{\bf Case 1. $\dim M=t+1$.}
\medskip

Let $S$ be the unique member of $\ms$. Since $T\subset S$, for $F\in\mf_1$, either $S\subset F$ or $S\cap F=T$ holds.

Suppose $S$ is contained in each member of $\mf_1$. We have
$$\mf_1\subset\mmc_3(S;m_1),\quad\mf_2\subset\mmc_4(S;m_2,t).$$
Together with the maximality of $\mf_1$ and $\mf_2$, (2) holds.

Now suppose there exists $F_{1,1}\in\mf_1$ with $S\cap F_{1,1}=T$. 
Let $I\in\mm(t+1,H)$ with  $T\subset I\not\subset S$. 
	Since $I$ is not a $t$-cover of $\mf_2$, there exists $F_{2,2}\in\mf_2$ such that $\dim(I\cap F_{2,2})<t$. 
	 Note that $\dim(T\cap F_{2,2})\ge t-1$.  
	We have $\dim(I\cap F_{2,2})=t-1$. 
	Since $F_{2,2}$ is a $t$-cover of $\mf_1$, by Lemma \ref{sp-fangsuo}, we obtain
$$|(\mf_1)_I|\le{m_2-t+1\b1}N'(t+2;m_1;2\nu).$$
Note that $S\subset H$.
Then by \eqref{sp-f1-str}, we have
\begin{align*}
	|(\mf_1)_T\bs(\mf_1)_S|&\le\sum_{I\in\mm(t+1,H),\ T\subset I\not\subset S}|(\mf_1)_I|\\
	&\le\left({m_2-t+1\b1}-1\right){m_2-t+1\b1}N'(t+2;m_1;2\nu)\\
	&=q{m_2-t\b1}{m_2-t+1\b1}N'(t+2;m_1;2\nu).
\end{align*}
Together with Lemma \ref{gaosifangsuo} and $2\nu\ge m_1+2m_2-t+4\ge m_1+t+1$, we get
\begin{equation}\label{sp-cross-m=t+1-f1}
	\begin{aligned}
		|\mf_1|\le&\ |(\mf_1)_S|+|(\mf_1)_T\bs(\mf_1)_S|\\
		\le&\ N'(t+1;m_1;2\nu)+q{m_2-t\b1}{m_2-t+1\b1}N'(t+2;m_1;2\nu)\\
		=&\left(1+q{m_2-t\b1}{m_2-t+1\b1}\dfrac{q^{m_1-t-1}-1}{q^{2(\nu-t-1)}-1}\right)N'(t+1;m_1;2\nu)\\
		<&\ (1+q^{m_1+2m_2-t+3-2\nu})N'(t+1;m_1;2\nu)\\
		\le&\ (1+q^{-1})N'(t+1;m_1;2\nu).
	\end{aligned}
\end{equation}
	Let $T'\in{S\b t}\bs\{T\}$. Note that $\dim(F_{1,1}\cap T')=t-1$. Since $F_{1,1}$ is a $t$-cover of $\mf_2$, by Lemma \ref{sp-fangsuo}, we have
$$
	|(\mf_2)_{T'}\bs(\mf_2)_S|\le\left({m_1-t+1\b1}-1\right)N'(t+1;m_2;2\nu)=q{m_1-t\b1}N'(t+1;m_2;2\nu).
$$
	Then by Lemma \ref{gaosifangsuo} and $2\nu\ge m_1+m_2+t+4\ge m_2+t$, we obtain
\begin{align*}
	|\mf_2|&=|(\mf_2)_{T}|+\sum_{T'\in{S\b t}\bs\{T\}}|(\mf_2)_{T'}\bs(\mf_2)_S|\\
	&\le\ N'(t;m_2;2\nu)+q^2{t\b1}{m_1-t\b1}N'(t+1;m_2;2\nu)\\
	&=\left(1+q^2{t\b1}{m_1-t\b1}\dfrac{q^{m_2-t}-1}{q^{2(\nu-t)}-1}\right)N'(t;m_2;2\nu)\\
	&\le\ (1+q^{m_1+m_2+t+2-2\nu})N'(t;m_2;2\nu)\\
	&\le\ (1+q^{-2})N'(t;m_2;2\nu).
\end{align*}
Together with \eqref{sp-cross-m=t+1-f1} and Lemma \ref{gaosifangsuo}, we get
$$\dfrac{|\mf_1||\mf_2|}{c_0(\nu,m_1,m_2,t)}\le\dfrac{(1+q^{-1})(1+q^{-2})}{q{m_2-t\b1}}\le q^{t+1-m_2}\le1.$$
Then (3) follows from Lemma \ref{c1>c0}.

	\medskip
\noindent{\bf Case 2. $t+2\le\dim M\le m_2$.}
\medskip

Assume that there exists $F_{2,3}\in\mf_2\bs(\mf_2)_T$ with $T+F_{2,3}\neq H$. Observe that $\dim(T\vee F_{2,3})=m_2+1$ and $\dim(F\cap(T\vee F_{2,3}))\ge t+1$ for each $F\in\mf_1$. 
Set
$$W=(T+F_{2,3})\cap H,\quad w=\dim W,$$
$$\mmu=\left\{(U_1,U_2)\in{T+F_{2,3}\b t+1}\times{H\b t+1}:  T\subset U_1\not\subset W, T\subset U_2\not\subset W\right\}.$$
We have
\begin{equation}\label{sp-uu1u2}
	\mf_1\subset\left(\bigcup_{U\in{W\b t+1},\ T\subset U}(\mf_1)_U\right)\cup\left(\bigcup_{(U_1,U_2)\in\mmu}(\mf_1)_{U_1+U_2}\right).
\end{equation}
For each $(U_1,U_2)\in\mmu$, since $U_1\subset T+F_{2,3}$ and $U_1\not\subset W$, we have $U_1\not\subset H$ and $U_1\neq U_2$. Together with $T\subset U_1\cap U_2$, we get $\dim(U_1+U_2)=t+2$. 
Thus
$$\left|\bigcup_{(U_1,U_2)\in\mmu}(\mf_1)_{U_1+U_2}\right|\le\left({m_2-t+1\b1}-{w-t\b1}\right)^2N'(t+2;m_1;2\nu).$$
Then by $\left|\left\{U\in{W\b t+1}:  T\subset U\right\}\right|={w-t\b1}$ and \eqref{sp-uu1u2}, we obtain
\begin{align*}
	|\mf_1|&\le{w-t\b1}N'(t+1;m_1;2\nu)+\left({w-t\b1}-{m_2-t+1\b1}\right)^2N'(t+2;m_1;2\nu)\\
	&=x^2N'(t+2;m_1;2\nu)+xN'(t+1;m_1;2\nu)+{m_2-t+1\b1}N'(t+1;m_1;2\nu),
\end{align*}
where $x={w-t\b1}-{m_2-t+1\b1}$.
By $2\nu\ge m_1+m_2+3$ we have
$${w-t\b1}-{m_2-t+1\b1}\ge-q^{m_2-t+1}\ge-q^{2\nu-m_1-t-2}\ge-\dfrac{N'(t+1;m_1;2\nu)}{2N'(t+2;m_1;2\nu)}.$$
Together with $w\le m_2$, $N'(t+2;m_1;2\nu)\ge0$ and the property of quadratic function, we obtain
$$|\mf_1|\le{m_2-t\b1}N'(t+1;m_1;2\nu)+q^{2m_2-2t}N'(t+2;m_1;2\nu).$$
Then by Lemma \ref{gaosifangsuo} and $2\nu\ge m_1+2m_2-t+1$ we get
\begin{equation}\label{sp-cross-m>t+1-f1}
	\begin{aligned}
		|\mf_1|
		&\le\left({m_2-t\b1}+\dfrac{q^{2m_2-2t}(q^{m_1-t-1}-1)}{q^{2(\nu-t-1)}-1}\right)N'(t+1;m_1;2\nu)\\
		&<\left({m_2-t\b1}+q^{m_1+2m_2-t+1-2\nu}\right)N'(t+1;m_1;2\nu)\\
		&\le\left({m_2-t\b1}+1\right)N'(t+1;m_1;2\nu).
	\end{aligned}
\end{equation}

Set $k:=\dim M$. We have
$$\mf_2\subset\{F\in\mp_{m_2}:  T\subset F\}\cup\{F\in\mp_{m_2}:  T\not\subset F, \dim(F\cap M)=k-1\},$$
which implies that
\begin{equation*}
	|\mf_2|\le N'(t;m_2;2\nu)+q^{k-t}{t\b1}N'(k-1;m_2;2\nu).
\end{equation*}
By Lemma \ref{gaosifangsuo} and $2\nu\ge m_1+2m_2>m_2+k$, we have
$$\dfrac{q^{k-t}N'(k-1;m_2;2\nu)}{q^{k-t+1}N'(k;m_2;2\nu)}=\dfrac{q^{2(\nu-k+1)}-1}{q(q^{m_2-k+1}-1)}\ge q^{2\nu-m_2-k}\ge1,$$
which implies that
\begin{equation*}
	|\mf_2|\le N'(t;m_2;2\nu)+q^{2}{t\b1}N'(t+1;m_2;2\nu).
\end{equation*}
Together with \eqref{sp-cross-m>t+1-f1}, $m_2\ge k\ge t+2$, $2\nu\ge m_1+2m_2+3$ and Lemma \ref{gaosifangsuo}, we obtain
\begin{align*}
	&\dfrac{c_0(\nu,m_1,m_2,t)-|\mf_1||\mf_2|}{N'(t+1;m_1;2\nu)N'(t;m_2;2\nu)}\\
	\ge&{m_2-t\b1}-1-q^2{t\b1}\left({m_2-t\b1}+1\right)\dfrac{q^{m_2-t}-1}{q^{2(\nu-t)}-1}\\
	=&{m_2-t\b1}\left(1-\dfrac{q-1}{q^{m_2-t}-1}-\dfrac{q^2(q^t-1)(q^{m_2-t}-1)}{(q-1)(q^{2(\nu-t)}-1)}-\dfrac{q^2(q^t-1)}{q^{2(\nu-t)}-1}\right)\\
	>&{m_2-t\b1}(1-q^{-1}-q^{m_2+2t+2-2\nu}-q^{3t+2-2\nu})\\
	\ge&{m_2-t\b1}(1-q^{-1}-q^{-m_1-m_2+2t-1}-q^{-m_1-2m_2+3t-1})\\
	>&{m_2-t\b1}(1-q^{-1}-q^{-3}-q^{-4})\\
	>&\ 0.
\end{align*}
Then (3) followis from Lemma \ref{c1>c0}.

Now suppose that $T+G=H$ holds for each $G\in\mf_2\bs(\mf_2)_T$. 
We have
$$\mf_2\subset\{F\in\mp_{m_2}: T\subset F\}\cup{H\b m_2}.$$
Note that $\dim H=m_2+1$ and \eqref{sp-f1-str} holds. We will investigate cross $t$-intersecting families under this case in Case 3.

\medskip
\noindent{\bf Case 3. $\dim M=m_2+1$.}
\medskip

In this case, we have $M=H$. We have
$$\mf_1\subset\{F\in\mp_{m_1}:  T\subset F, \dim(F\cap M)\ge t+1\},\quad\mf_2\subset\{F\in\mp_{m_2}:  T\subset F\}\cup{M\b m_2}.$$
If $M$ is totally isotropic, by the maximality of $\mf_1$ and $\mf_2$, we have
$$\mf_1=\mmc_1(M,T;m_1,t),\quad\mf_2=\mmc_2(M,T;m_2),$$
i.e., (1) holds. 

Now suppose that $M$ is not totally isotropic. Note that the type of $M$ is $(m_2+1,1)$. Since $M=T+F_{2,1}$, it is routine to check that
$$|\{S\in\mm(t+1,M): T\subset S\}|={m_2-t\b1}.$$
Together with \eqref{sp-f1-str}, we have
$$|\mf_1|\le{m_2-t\b1}N'(t+1;m_1;2\nu).$$
Then by $2\nu\ge m_1+2m_2-t+2$, it is routine to check that
$$|\mf_1|<q{m_2-t\b1}N'(t+1;m_1;2\nu)\le s_0(\nu,m_1,m_2+1,t).$$
Since $M$ is of type $(m_2+1,1)$, we have
$$|\mf_2|\le N'(t;m_2;2\nu)+(q+1)<N'(t;m_2;2\nu)+q^{m_2-t+1}{t\b1}.$$
Then from Lemma \ref{c1>c0}, we obtain
$$|\mf_1||\mf_2|<s_0(\nu,m_1,m_2+1,t)\left(N'(t;m_2;2\nu)+q^{m_2-t+1}{t\b1}\right)\le c_1(\nu,m_1,m_2,t),$$
i.e., (3) holds.
	\end{proof}

	\begin{proof}[\bf Proof of Theorem \ref{non-trivial}]
		Let $\nu$, $m_1$, $m_2$ and $t$ be positive integers with $m_1\ge m_2\ge t+1$ and $2\nu\ge2m_1+m_2+t+3$. Suppose $\mf_1\subset\mp_{m_1}$  and $\mf_2\subset\mp_{m_2}$ are non-trivial cross $t$-intersecting families with maximum product of sizes. Note that
		\begin{equation*}\label{sp-fif2-c1c2}
			|\mf_1||\mf_2|\ge\max\{c_1(\nu,m_1,m_2,t),c_1(\nu,m_2,m_1,t),c_2(\nu,m_1,m_2,t),c_2(\nu,m_2,m_1,t)\}.
		\end{equation*}
	
		By Lemma \ref{sp-cross-upper-other}, we have$(\tau_t(\mf_1),\tau_t(\mf_2))=(t,t+1)$ or $(\tau_t(\mf_1),\tau_t(\mf_2))=(t+1,t)$. Together with Lemmas \ref{cp1-cp2}, \ref{cp-cp'} and \ref{sp-cross-upper-tt+1}, we finish our proof.
	\end{proof}

\end{document}